\title{Connected components of definable groups and $o$-minimality I}
\date{January 27th, 2011}
\author{Annalisa Conversano\\University of Konstanz \and Anand Pillay\thanks{Supported by EPSRC grant EP/I002294/1}\\University of Leeds}
\newtheorem{Theorem}{Theorem}[section]
\newtheorem{Proposition}[Theorem]{Proposition}
\newtheorem{Definition}[Theorem]{Definition} 
\newtheorem{Remark}[Theorem]{Remark}
\newtheorem{Lemma}[Theorem]{Lemma}
\newtheorem{Corollary}[Theorem]{Corollary}
\newtheorem{Fact}[Theorem]{Fact}
\newtheorem{Example}[Theorem]{Example}
\newtheorem{Problem}[Theorem]{Problem}
\newtheorem{Conjecture}[Theorem]{Conjecture}
\newcommand{\R}{\mathbb R}   
\newcommand{\Z}{\mathbb Z}
\begin{document}
\maketitle

\begin{abstract} 
We give examples of definable groups $G$ (in a saturated model, sometimes $o$-minimal) such that 
$G^{00} \neq G^{000}$,  yielding also new examples of ``non $G$-compact" theories. We also prove that for $G$ definable in a (saturated) $o$-minimal structure, $G$ has a ``bounded orbit" (i.e. there is a type of $G$ whose stabilizer has bounded index) if and only if $G$ is definably amenable, giving a positive answer to a conjecture of Newelski and Petrykowski in this {\em special case} of groups definable in $o$-minimal structures. 
We also introduce and discuss further conjectures on bounded orbits in definable groups. These results and analyses are informed by a decomposition theorem for groups in $o$-minimal structures.
\end{abstract}

\section{Introduction and preliminaries}
In this paper groups definable in $o$-minimal and closely related structures are studied, partly for their own sake and partly as a ``testing ground" for general conjectures. 
Given a $\emptyset$-definable group $G$ in a saturated structure ${\bar M}$, $G^{00}_{\emptyset}$ is the smallest subgroup of $G$ of bounded index which is type-definable over $\emptyset$, and $G^{000}_{\emptyset}$ is the smallest subgroup of $G$ of bounded index which is $Aut({\bar M})$-invariant. In $o$-minimal structures and more generally theories with $NIP$, these ``connected components" remain unchanged after naming parameters and so are just referred to as $G^{00}$ and $G^{000}$. In any case $G^{00}_{\emptyset}$ and $G^{000}_{\emptyset}$ are ``definable group" analogues of the groups of $KP$-strong automorphisms and Lascar strong automorphisms, respectively, of a saturated structure. The relationship between these definable group and automorphism group notions is explored in \cite{Gismatullin-Newelski}. Although examples were given in \cite{CLPZ} where the strong automorphism groups differ, until now no example was known where $G^{000}_{\emptyset} \neq G^{00}_{\emptyset}$.  In this paper (Section 3) we give a ``natural" example: $G$ is simply a saturated elementary extension of $\widetilde{SL(2,\R)}$ (the universal cover of $SL(2,\R)$) in the language of groups. $G$ is {\em not} actually definable in an $o$-minimal structure,  but we give another closely related example which is. In any case the two-sorted structure consisting of $G$ and a principal homogeneous space for $G$ is now a (natural) example of a ``non $G$-compact" structure (or theory) i.e. where the group of Lascar strong automorphisms  is a properly contained in the group of $KP$-strong automorphisms. 

Another fruitful theme in recent years has been the generalization of stable group theory outside the stable context. The $o$-minimal case has been important and there is now a good understanding of ``definably compact" groups from this point of view; for example they are definably amenable, ``generically stable for measure", and $G$ is dominated by $G/G^{00}$. In the current paper we try to go beyond the definably compact setting, motivated partly by questions of Newelski and Petrykowski. In \cite{NIPI}, definable groups $G$  with  ``finitely satisfiable generics" (which include definably compact groups in $o$-minimal structures)  were shown to be definably amenable by lifting the Haar measure on $G/G^{00}$ to a left invariant Keisler measure on $G$, making use of a global {\em generic type} $p$, whose stabilizer is $G^{00}$. We guess this encouraged Petrykowski to suggest that if a definable group $G$ (in any structure) has a global type whose stabilizer has ``bounded index" then $G$ is definably amenable. In Section 4 we confirm this conjecture when $G$ is definable in an $o$-minimal structure, as well as raise questions about nature of types with bounded orbit in the $o$-minimal and more generally $NIP$ environment.

In Section 2 of the paper we give a rather basic decomposition theorem (implicit in the literature) for groups in 
$o$-minimal structures, which is  useful for understanding the issues around definable amenability and bounded orbits, as well as $G^{00}$ and $G^{000}$ (although Section 3 can be more or less read independently of Section 2). We introduce and discuss the notion of $G$ having a ``good decomposition" (Definition 2.7), and in fact the $o$-minimal examples where $G^{00} \neq G^{000}$ will be also examples where good decomposition fails, although good decomposition does hold for algebraic groups. 

In a sequel to the current paper, \cite{Conversano-PillayII}, we will give a systematic account of $G^{00}$, $G^{000}$ as well as the quotient $G^{00}/G^{000}$, for groups $G$ in $o$-minimal structures. The decomposition theorem (2.6) as well as refinements of it will play a major role. 


In general $T$ will denote a complete theory, $M$ an arbitrary model of $T$, and $G$ a group definable in $M$. 
We sometimes work in a sufficiently saturated and homogeneous model ${\bar M}$ of $T$, in which case ``small" or ``bounded" essentially means of cardinality strictly less than the degree of saturation of ${\bar M}$, but we will make the meaning more precise later in the paper. 
{\em Definability} usually means with parameters, and we say $A$-definable to mean definable with parameters from $A$ for $A$ a subset of $M$.
When we talk about $o$-minimal theories we will mean $o$-minimal expansions of the theory of real closed fields (and we leave it for later or to others to consider more general $o$-minimal contexts). In the $o$-minimal context, the important notion of definable compactness
was introduced by Peterzil and Steinhorn in \cite{PS}. For $X$ a definable subset of 
$M^{n}$, definable compactness of $X$ amounts to $X$ being closed and bounded in $M^{n}$. In the more general case of $X$ being a {\em 
definable manifold}, it means that for any definable function $f$ from $[0,1)$ to $X$, $lim_{x \to 1}f(x)$ exists in $X$.
When $G$ is a definable group, $G$ can be equipped with a definable manifold structure such that multiplication and inversion are continuous \cite{Pillay - groups}. Definable compactness of a definable group $G$ is then meant with respect to this definable manifold structure. But, as we are working in an $o$-minimal expansion of a real closed field, any definable group manifold $G$ can be assumed to be a definable subset of some $M^{n}$, and so definable compactness of $G$ reduces to $G$ being closed and bounded. 
{\em Definable connectedness} of $G$ is meant with respect to its definable manifold structure mentioned above. But it turns out that $G$ is definably connected in this sense if and only if $G$ has no proper definable subgroup of finite index (i.e. $G = G^{0}$). Any definable group $G$ is definably connected by finite, and so (in this $o$-minimal context) we will often assume that our definable groups are definably connected.
We will often use the well-known fact that any definably compact, definably connected, solvable normal definable subgroup $N$ of a definably connected group is central. This follows from Corollaries 5.3 and 5.4 of  \cite{Peterzil-Starchenko}. We will also use the fact that if $N$ is normal and definable in $G$, then $G$ is definably compact if and only if $N$ and $G/N$ are definably compact. (Following from \cite{NIPI} and \cite{NIPII}.)

In Section 4 of this paper we will make some references to ``stability-type" notions, $NIP$ theories, forking, etc. We generally refer the reader to \cite{NIPII} for the definitions, but make a few comments here. For ${\bar M}$ a saturated model of arbitrary theory $T$ and $G$ a group definable in ${\bar M}$, recall that $S_{G}({\bar M})$ denotes the space of complete types $p(x)$ over ${\bar M}$ such that $``x\in G" \in p$. $G$ (namely $G({\bar M})$) acts on $S_{G}({\bar M})$ on the left by  $gp = tp(ga/{\bar M})$ where  $a$ realizes $p$ in a bigger model. Slightly modifying Definition 5.1 from \cite{NIPII}, we will say that $p(x)\in S_{G}({\bar M})$ is {\em left $f$-generic}  if there is a small model $M_{0}$ such that for any $g\in G({\bar M})$, $gp$ does not fork over $M_{0}$.

The second author was partly motivated by some e-mail discussions with Hrushovski and Newelski in the late summer of 2010. Thanks to both of them for the inspiration, and in particular to Hrushovski for allowing us to include (in Section 4) some observations that he made on definable amenability. 

Many of the themes and results of this paper and the sequel appear in one form or another in the first author's doctoral thesis \cite{Conversano-thesis}, which is devoted to structural properties of groups definable in $o$-minimal structures (but does not explicitly discuss $G^{000}$). 
In particular the $o$-minimal example where $G^{00} \neq G^{000}$  (Example 2.10/Theorem 3.3) appears in her thesis as an example of a definable group {\em without} a definable ``Levi decomposition". 
In any case the first author would like to thank her advisor Alessandro Berarducci, as well as Ya'acov Peterzil for useful conversations.


\section{Decomposition theorems}
In this section $T$ is a complete $o$-minimal expansion of $RCF$, and we work in a model $M$ of $T$. $G$ will typically denote a definable, definably connected group, although we usually explicitly state definable connectedness. $K$ will denote the underlying real closed field of $M$. 
We first aim towards a useful ``basic decomposition theorem", Proposition 2.6 below (which is easily extracted from results in the literature).

We begin by pointing out the existence, in every definable group, of a (unique) maximal normal definable torsion-free subgroup. As usual, for a positive integer $n$, an $n$-torsion element of $G$ is an element $x \in G$ such that $x^n = 1$, $1$ being the identity of the group (note that we are not assuming $G$ is commutative). We make use of results from \cite{Strzebonski}
connecting the existence of $n$-torsion elements with the $o$-minimal Euler characteristic of $G$.
Recall that if $\mathcal{P}$ is a cell decomposition of a definable set $X$, then the 
{\em $o$-minimal Euler characteristic} $E(X)$ is the number of even-dimensional cells in $\mathcal{P}$ minus the number of odd-dimensional cells in $\mathcal{P}$. This does not depend on $\mathcal{P}$, and when $X$ is finite then $E(X) = |X|$. A definable torsion-free group will be definably connected (Corollary 2.4 of \cite{PeSta}  but also follows from the proof of (ii) below). 

\begin{Proposition} \label{unifree}
(i) $G$ is torsion-free if and only if $G$ is ``solvable with no definably compact parts" in the sense of \cite{Edmundo}, namely there are
definable subgroups  $\{1\} = G_{0} < ... < G_{n} = G$ of $G$ such that for each $i<n$, $G_{i}$ is normal in $G_{i+1}$ and $G_{i+1}/G_{i}$ is $1$-dimensional and torsion-free. (In particular a torsion-free definable group is solvable.)
\newline
(ii) In every definable group $G$ there is a normal definable torsion-free subgroup which contains every
normal definable torsion-free subgroup of $G$. It is the unique normal definable torsion-free subgroup of $G$
of maximal dimension. We will refer to it as {\em the maximal normal definable torsion-free subgroup} of $G$, and note that it is invariant under all automorphisms of $(G,\cdot)$ which are definable in the ambient structure.
\end{Proposition}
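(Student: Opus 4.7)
My plan is to prove (i) by structural induction in both directions, and then to use (i) as the main leverage for (ii).

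For the easy direction of (i), namely the implication from the chain to torsion-free, I would induct on the chain length $n$. The base $n=0$ is trivial. Given $G_i$ torsion-free and any $g \in G_{i+1}$ with $g^k = 1$, the image $gG_i$ has order dividing $k$ in the torsion-free group $G_{i+1}/G_i$, so $g \in G_i$ and hence $g = 1$ by the inductive hypothesis. As a by-product, $G$ is definably connected, since each $G_{i+1}/G_i$ is a $1$-dim torsion-free definable group (hence definably connected) and definable connectedness propagates up an extension tower. The other direction is the substantive half. I would proceed by induction on $\dim G$, using two inputs: (a) a torsion-free definable group in an $o$-minimal expansion of $RCF$ is solvable, since otherwise the known structure results for non-solvable definable groups produce a definably simple infinite subquotient, which is bi-interpretable with a semisimple algebraic group over the underlying real closed field and therefore contains torsion, and Strzebonski's \cite{Strzebonski} connection between torsion and the $o$-minimal Euler characteristic rules this out under the hypothesis; and (b) the decomposition of a definably connected solvable torsion-free definable group into a chain with $1$-dimensional torsion-free factors, which is essentially Edmundo's theorem from \cite{Edmundo}.

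For (ii), let $N$ be a normal definable torsion-free subgroup of $G$ of maximal dimension; such an $N$ exists because the trivial group qualifies and dimensions are bounded above by $\dim G$. The heart of the matter is to show that any other normal definable torsion-free $N' \trianglelefteq G$ is contained in $N$, and for this it suffices to check that the product $NN'$ is again a normal definable torsion-free subgroup, since then maximality of $\dim N$ forces $NN' = N$. Normality and definability of $NN'$ are immediate. For torsion-freeness I would invoke (i) to get chains $\{1\} = H_0 < \cdots < H_m = N'$ in $N'$ and analogously in $N$, with $1$-dim torsion-free factors. Pushing the chain for $N'$ through the quotient map $\pi : N' \twoheadrightarrow N'/(N' \cap N) \cong NN'/N$, each successive factor $\pi(H_{i+1})/\pi(H_i)$ is a quotient of the $1$-dim torsion-free definable group $H_{i+1}/H_i$ by a definable subgroup; but any definable subgroup of a $1$-dim torsion-free definable group is either finite (hence trivial by torsion-freeness) or of full dimension (hence everything by definable connectedness), so each factor is either trivial or $1$-dim torsion-free. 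After discarding the trivial ones and concatenating with the chain for $N$, I obtain a chain for $NN'$ with $1$-dim torsion-free factors, and the easy direction of (i) then yields $NN'$ torsion-free. Uniqueness of $N$ and its characterization as the unique normal definable torsion-free subgroup of maximal dimension both follow at once. Automorphism-invariance is a one-line consequence: for any automorphism $\phi$ of $(G,\cdot)$ definable in the ambient structure, $\phi(N)$ is normal, definable, torsion-free, of the same dimension, so $\phi(N) \leq N$ by the containment just proved, and symmetry with $\phi^{-1}$ gives $\phi(N) = N$.

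The step I expect to be the main obstacle is the ``$\Rightarrow$'' direction of (i). Simply choosing a $1$-dimensional torsion-free normal subgroup $H$ of $G$ does \emph{not} automatically make $G/H$ torsion-free --- a torsion-free $G$ can, a priori, have an element of infinite order whose image in $G/H$ has finite order --- so the delicate point is to arrange the $1$-dim subgroups in the chain so that \emph{all} successive quotients are themselves torsion-free. This is exactly what Edmundo's structural result provides, and it is the non-routine ingredient of the whole argument.
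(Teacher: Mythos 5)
Your proof is correct, and its skeleton for (ii) --- take $N$ of maximal dimension, show that its product with any other normal definable torsion-free subgroup is again a normal definable torsion-free subgroup, then conclude from the dimension bound and definable connectedness --- is exactly the paper's. The difference lies in the engine used at the two key points. For the hard direction of (i) the paper does not pass through solvability at all: it quotes Corollary 2.12 of Peterzil--Starchenko, which for a nontrivial torsion-free $G$ directly produces a normal definable $H$ with $G/H$ one-dimensional \emph{and torsion-free}; the chain is thus built top-down, the difficulty you flag at the end (arranging that all successive quotients be torsion-free) is absorbed into that citation, and solvability falls out as a corollary rather than being an input. For (ii) the paper's mechanism is the multiplicativity of the $o$-minimal Euler characteristic together with Strzebonski's criterion that a definable group is torsion-free iff its Euler characteristic is $\pm 1$: this gives in one line that quotients of torsion-free definable groups are torsion-free and that $E(HN)=E(N)E(HN/N)=\pm 1$, whence $HN$ is torsion-free, with no appeal to part (i). Your alternative --- pushing the chain for $N'$ through $N'\twoheadrightarrow NN'/N$, observing that a definable subgroup of a one-dimensional torsion-free group is trivial or everything, concatenating with the chain for $N$, and invoking the easy half of (i) --- is valid, but it makes (ii) logically dependent on the substantive half of (i) and re-proves by hand (closure of torsion-freeness under definable quotients and extensions) what the Euler characteristic yields immediately. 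Both routes work; the Euler characteristic argument is the shorter and more robust one, and is the tool the paper actually leans on.
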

\begin{proof}
(i) Right to left is obvious. Left to right follows (using induction) from Corollary 2.12 of \cite{PeSta} which states that if $G$ is torsion-free (and nontrivial) then there is a normal definable subgroup $H$ of $G$ such that $G/H$ is $1$-dimensional and torsion-free.
\newline
(ii) 
We recall that for definable groups $K < G$, 
\[
E(K) E(G/K) = E(G),
\]

and $G$ is torsion-free if and only if $E(G) = \pm 1$ (\cite{Strzebonski}). It follows that a quotient of torsion-free definable groups is still torsion-free (and hence torsion-free definable groups are definably connected).

Let $N$ be a normal definable torsion-free subgroup of $G$ of maximal dimension, and $H$ any normal definable torsion-free subgroup of $G$. We want to show that $H \subseteq N$.

We claim that $HN$ is a normal definable torsion-free subgroup of $G$: the definable group $H/(H \cap N)$ is torsion-free and it is definably isomorphic to $HN/N$. Thus   
$E(HN) = E(N)E(HN/N) = \pm1$ and $HN$ is torsion-free.  

But $N$ is of maximal dimension among the normal definable torsion-free subgroups of $G$, so 
$\dim(HN) = \dim(N)$. 
Since definable torsion-free groups are definably connected, it follows that $HN = N$, $H \subseteq N$ and 
$\dim H < \dim N$, unless $H = N$.
\end{proof}


Bearing in mind Proposition 2.1, the following proposition is easily deduced from Theorem 5.8 of \cite{Edmundo}, together with the fact that definably compact, definably connected, solvable definable groups are commutative:
\begin{Proposition} Let $G$ be a definable, solvable, definably connected group, and let $W$ be its maximal normal definable torsion-free subgroup. Then $G/W$ is definably compact and commutative. 
\end{Proposition}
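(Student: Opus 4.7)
The plan is to extract from Edmundo's Theorem 5.8 some normal definable torsion-free subgroup $N \trianglelefteq G$ with $G/N$ definably compact, and then replace $N$ by $W$ using the maximality statement of Proposition~\ref{unifree}(ii). Commutativity will then follow from a cheap application of the Peterzil--Starchenko centrality fact recalled in the preliminaries.

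More precisely, first I would invoke Theorem 5.8 of \cite{Edmundo}: since $G$ is solvable and definably connected, it produces a normal definable torsion-free subgroup $N \trianglelefteq G$ such that $G/N$ is definably compact. By Proposition~\ref{unifree}(ii) the maximal normal definable torsion-free subgroup $W$ of $G$ contains $N$, so $W/N$ is a normal definable subgroup of $G/N$ and
\[
G/W \;\cong\; (G/N)\bigl/(W/N).
\]
Using the preliminary fact (from \cite{NIPI}, \cite{NIPII}) that definable compactness passes to quotients by normal definable subgroups, we conclude that $G/W$ is definably compact.

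For commutativity, note that $G/W$ is definably connected and solvable (both properties pass to quotients of definably connected solvable groups). Thus $G/W$ is a definably compact, definably connected, solvable definable group. Viewing $G/W$ as a normal definable subgroup of itself and applying the Peterzil--Starchenko fact recalled in the preliminaries (Corollaries 5.3 and 5.4 of \cite{Peterzil-Starchenko}) — that any definably compact, definably connected, solvable normal definable subgroup of a definably connected group is central — we obtain that $G/W$ is central in $G/W$, i.e.\ commutative.

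I do not anticipate a substantive obstacle: once Edmundo's theorem supplies \emph{some} normal definable torsion-free $N$ with $G/N$ definably compact, the maximality of $W$ and the two preliminary facts about definable compactness of quotients and centrality of definably compact solvable normal subgroups close the argument immediately. The only point to double-check is that the version of Theorem 5.8 of \cite{Edmundo} being cited genuinely delivers a torsion-free $N$ (and not merely one with torsion-free quotient $G/N$ or similar), but this is standard in the literature on solvable groups definable in $o$-minimal structures.
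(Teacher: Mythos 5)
Your proposal is correct and follows exactly the route the paper intends: the paper gives no written proof, only the remark that the statement is ``easily deduced from Theorem 5.8 of \cite{Edmundo}, together with the fact that definably compact, definably connected, solvable definable groups are commutative,'' bearing in mind Proposition~2.1, and your argument fills in precisely those steps (including the correct use of Proposition~2.1(i) to translate ``solvable with no definably compact parts'' into ``torsion-free,'' and of 2.1(ii) to replace $N$ by $W$). The only detail worth making explicit is that $G/W$ is definably connected because quotients of definably connected groups are definably connected, which you implicitly use before invoking the Peterzil--Starchenko centrality fact.
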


\vspace{2mm}
\noindent
Recall that a definable group $G$ is said to be {\em semisimple} if $G$ has no definable, normal, definably connected, solvable (or commutative), nontrivial subgroups. Then, clearly, for an arbitrary definable group $G$, we have the exact sequence
$$ 1 \to R \to G \to G/R \to 1 $$

\noindent
where $R$, the {\em solvable radical} of $G$ is the maximal definable, normal, solvable, definably connected subgroup of $G$, and $G/R$ is semisimple.
If $R$ is definably compact then it is central in $G$.

\begin{Definition} We call a definable group $G$, {\em definably almost simple}, if $G$ is noncommutative and has no infinite (equivalently nontrivial, definably connected) proper definable normal subgroup. 
\end{Definition}

Note that if $G$ is definably almost simple, then $Z(G)$ is finite and $G/Z(G)$ is definable simple, and moreover $G$ is definably compact if and only if $G/Z(G)$ is definably compact.

\begin{Lemma} Let the definable group be semisimple and definably connected. Then there are definable, definably almost simple subgroups $H_{1},..,H_{t}$ of $G$ such that $G$ is the almost direct product of the $H_{i}$, namely there is a definable surjective homomomorphism from $H_{1}\times ... \times H_{t}$ to $G$ with finite kernel.
\end{Lemma}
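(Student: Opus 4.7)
The plan is to prove the decomposition by induction on $\dim G$, extracting at each step one $G$-invariant block of almost simple factors and then passing to its connected centralizer, which will again be semisimple and of strictly smaller dimension.

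For the extraction step, I would first use DCC on dimensions of definable subgroups in the $o$-minimal setting to pick $K$ a minimal infinite definable normal-in-$G$ definably connected subgroup. The solvable radical of $K$ is characteristic in $K$ and hence normal in $G$, so it is trivial by semisimplicity of $G$; thus $K$ is itself semisimple. If $K$ is already definably almost simple it is one of the factors; otherwise I would choose $K'$ a minimal proper infinite definably connected definable subgroup that is normal in $K$ (not necessarily in $G$), and look at finitely many $G$-conjugates $K' = K'_1, \ldots, K'_r$ (finitely many suffice for the subgroup they generate to reach maximal dimension). Each $K'_i$ is normal in $K$, and for $i \ne j$ the intersection $K'_i \cap K'_j$ is normal in $K$; if its connected component were nontrivial, minimality of $K'_i$ and $K'_j$ would force $K'_i = K'_j$, so $K'_i \cap K'_j$ is finite. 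The commutator set $\{[x,y] : x \in K'_i, y \in K'_j\}$ is definably connected, generates a definable definably connected subgroup contained in the finite group $K'_i \cap K'_j$, hence trivial, so the $K'_i$ pairwise commute. Their product $K'_1 \cdots K'_r$ is definable, $G$-invariant and equals $K$ by minimality. Each $K'_i$ is definably almost simple, because any proper infinite definable normal subgroup of $K'_i$ would be centralized by the other factors and therefore normal in $K$, contradicting minimality of $K'_i$.

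For the complement, set $L = C_G(K)^0$: it is definably connected, normal in $G$, and $L \cap K \subseteq Z(K)$, which is finite because $Z(K)^0$ would otherwise be an abelian definably connected normal subgroup of the semisimple $G$. The decisive step is to establish $G = L \cdot K$. The conjugation action $G \to \mathrm{Aut}(K)$ has kernel $C_G(K)$ and sends $K$ onto the inner automorphisms, so $G/(K \cdot C_G(K))$ embeds into $\mathrm{Out}(K)$; an automorphism of $K = K'_1 \cdots K'_r$ permutes the almost simple factors and acts on each by an automorphism, so its outer class is controlled by $S_r \times \prod_i \mathrm{Out}(K'_i)$, which is finite in the $o$-minimal over $\mathrm{RCF}$ setting by the structure theory of definably simple groups (they are semialgebraically isomorphic to simple algebraic groups over a real closed field, whose outer automorphism groups are finite). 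Since $G$ is definably connected, $K \cdot L$ of finite index forces $G = K \cdot L$. Then $L$ is normal, definably connected, and semisimple (its solvable radical is characteristic in $L$, hence normal in $G$, hence trivial), with $\dim L = \dim G - \dim K < \dim G$. By induction $L = H_{r+1} \cdots H_t$ as an almost direct product of almost simple subgroups, and setting $H_i = K'_i$ for $i \le r$ gives a surjective definable homomorphism $H_1 \times \cdots \times H_t \to G$ with finite kernel, the pairwise commutativity and finite pairwise intersections of the $H_i$ having been arranged along the way.

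The main obstacle is the finiteness of $G/(K \cdot C_G(K))$, equivalently the finiteness of outer automorphism groups of definably almost simple groups in the $o$-minimal over $\mathrm{RCF}$ setting; the remainder is dimension counting, DCC, and routine commutator bookkeeping.
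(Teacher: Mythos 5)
Your argument is essentially sound, but it takes a genuinely different and much longer route than the paper, which disposes of the lemma in two lines: by \cite{PPSI} the centerless semisimple group $G/Z(G)$ is a direct product of definably simple groups $B_{1}\times\cdots\times B_{t}$, and one takes $H_{i}$ to be the definably connected component of the preimage of $B_{i}$ under $G\to G/Z(G)$. You instead re-derive the product decomposition from scratch via minimal normal subgroups, centralizers, and induction on dimension. Two caveats. First, your route does not actually economize on the deep input: the finiteness of $G/(K\cdot C_G(K))$, which you rightly identify as the crux, rests on the classification of definably simple groups from \cite{PPSI} and \cite{PPSII} (they are semialgebraic, essentially simple algebraic groups over a real closed field, with finite group of definable outer automorphisms) --- the very same source the paper cites for the whole statement; note also that to reduce to $\mathrm{Out}$ of the individual factors you should first observe that $G$, being definably connected, fixes each $K'_i$ setwise (the kernel of the permutation action on the finite set $\{K'_1,\dots,K'_r\}$ is a definable subgroup of finite index), and that $\mathrm{Aut}(K'_i)$ embeds into $\mathrm{Aut}(K'_i/Z(K'_i))$ because $K'_i$ is perfect. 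Second, finiteness of the kernel of $H_1\times\cdots\times H_t\to G$ requires more than pairwise finite intersections: you need each $H_i\cap\prod_{j\neq i}H_j$ to be finite, which does follow from your minimality argument (an infinite definably connected such intersection would force $H_i$ to centralize itself and hence lie in the finite centre of $K$), but it should be said. With these repairs the proof works; what it buys over the paper's citation is a self-contained reduction of the lemma to the single fact that definably simple groups have finite definable outer automorphism group, at the cost of considerably more bookkeeping.
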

\begin{proof} Well known. By \cite{PPSI}, $G/Z(G)$ is the direct product of definably simple groups  $B_{1},..,B_{t}$. Let $H_{i}$ be the definably connected component of the preimage of $B_{i}$ under the quotient map $G\to G/Z(G)$. 
\end{proof} 

\begin{Definition} Let $G$ be semisimple and definably connected. We say that $G$ has no definably compact part if in Lemma 2.4, no $H_{i}$ is definably compact. 
\end{Definition}

\noindent
We can now observe:

\begin{Proposition} Let $G$ be a definable (definably connected) group. Then there is a definable, definably connected normal, subgroup $W$ of $G$, and a definable, definably connected normal subgroup $C$ of $G/W$, such that
\newline
(i)  $W$ is torsion-free,
\newline
(ii)  $C$ is definably compact, and
\newline
(iii)  $(G/W)/C$ is semisimple with no definably compact part. 
\newline
$W$ is the maximal normal definable torsion-free subgroup of $G$, and $C$ is the maximal normal definable, definably compact, definably connected subgroup of $G/W$.
\end{Proposition}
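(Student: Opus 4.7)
The plan is to define $W$ and $C$ as the indicated maximal subgroups and verify the three listed properties, the nontrivial point being that $\bar G/C$ (where $\bar G := G/W$) is semisimple with no definably compact part. For $W$ I take the maximal normal definable torsion-free subgroup of $G$ given by Proposition 2.1(ii); it is definably connected by the observation there that torsion-free definable groups are definably connected, and so (i) and the stated description of $W$ hold.

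Inside $\bar G$ I let $C$ be a normal definable, definably compact, definably connected subgroup of maximal dimension. The product $C_1 C_2$ of any two such subgroups is again of the same form: it is normal; it is definably connected, as an extension of the definably connected $C_1$ by the definably connected quotient $C_2/(C_1\cap C_2)$; and it is definably compact by the same extension together with the closure of definable compactness under extensions noted in the preliminaries. So $C$ contains every such subgroup, and is the unique maximal one.

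The technical core is to show $R(\bar G) \subseteq C$, where $R(\bar G)$ is the solvable radical of $\bar G$. Applying Proposition 2.2 to the solvable definably connected group $R(\bar G)$ yields its maximal normal torsion-free subgroup $W'$ with $R(\bar G)/W'$ definably compact (and commutative). Since $W'$ is characteristic in $R(\bar G)$ (by the invariance clause in Proposition 2.1(ii)) and $R(\bar G)$ is characteristic in $\bar G$, the preimage $\tilde W$ of $W'$ in $G$ is normal in $G$ and sits in an exact sequence $1 \to W \to \tilde W \to W' \to 1$ of torsion-free groups. Multiplicativity of the Euler characteristic gives $E(\tilde W) = E(W) E(W') = \pm 1$, so $\tilde W$ is torsion-free; maximality of $W$ in $G$ then forces $\tilde W = W$, hence $W' = 1$ and $R(\bar G)$ is itself definably compact, giving $R(\bar G) \subseteq C$.

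Since $\bar G/R(\bar G)$ is semisimple and definably connected, Lemma 2.4 writes it as the almost direct product of definably almost simple $H_1, \ldots, H_t$. Any normal definably connected subgroup of this almost direct product is necessarily the almost direct product of some subset of the $H_i$ (after lifting to $H_1 \times \cdots \times H_t$, normal subgroups split as products of normal subgroups of the factors, and for definably almost simple $H_i$ the only definably connected such factors are $\{1\}$ or $H_i$); imposing definable compactness restricts that subset to indices for which $H_i$ is definably compact. This pins $C/R(\bar G)$ down as the almost direct product of the definably compact $H_i$ and $\bar G/C$ as the almost direct product of the remaining $H_i$, which is the required semisimple group with no definably compact part. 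The main obstacle in the argument is the step $R(\bar G) \subseteq C$, whose delicacy lies in transferring torsion-freeness across the quotient $G \to \bar G$ via the Euler characteristic identity and then invoking maximality of $W$ back in $G$.
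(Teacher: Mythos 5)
Your argument is correct, but it is organized rather differently from the paper's. The paper takes $W$ to be the maximal normal definable torsion-free subgroup of the solvable radical $R$ of $G$ (observing that this coincides with the maximal one of $G$), obtains $R/W$ definably compact and commutative directly from Proposition 2.2, notes that $R/W$ is the solvable radical of $G/W$, and then simply \emph{defines} $C := \pi^{-1}(C_{1})$, the preimage in $G/W$ of the product $C_{1}$ of the definably compact factors of $G/R$; properties (ii) and (iii) are then immediate from closure of definable compactness under extensions and the fact that $(G/W)/C$ is an isogenous image of $D_{1}$. You instead characterize $C$ intrinsically as the maximal normal definably compact definably connected subgroup of $G/W$ (your product argument for existence and uniqueness is fine, and in fact supplies the justification for the paper's closing assertion that $C$ \emph{is} maximal, which the paper leaves unargued), and you prove that the solvable radical of $G/W$ is definably compact by pulling its torsion-free part back to $G$ and using multiplicativity of the $o$-minimal Euler characteristic --- a legitimate alternative to the paper's route of applying Proposition 2.2 upstairs to $R$ itself. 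The one step you should expand is the classification of normal definably connected subgroups of the almost direct product of the $H_{i}$: the parenthetical claim that ``normal subgroups split as products of normal subgroups of the factors'' is false for general direct products (consider the diagonal in $H\times H$ with $H$ abelian). What saves you is that each $H_{i}$ is definably almost simple: for a definably connected normal $N$ one has $[p_{i}(N),H_{i}]\subseteq N\cap H_{i}$ for each projection $p_{i}$, and if $p_{i}(N)$ is nontrivial (hence equal to $H_{i}$) then $N\cap H_{i}$ contains the infinite normal subgroup generated by $[H_{i},H_{i}]$ and therefore equals $H_{i}$. With that spelled out, your proof is complete and buys a slightly stronger statement (the intrinsic maximality of $C$) than the paper's verification-by-construction.
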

\begin{proof} Let $R$ be the solvable radical of  $G$, and let $W$ be the maximal normal definable torsion-free subgroup of $R$ (given by Proposition 2.1). So $R/W$ is definably compact and commutative by 2.2. But let us note for now that since any definable torsion-free group is definably connected and solvable (\cite[2.11]{PeSta}), then $W$ coincides with the maximal normal definable torsion-free subgroup of $G$.

Now $R/W$ is the solvable radical of $G/W$ (and is also connected, definably compact, so in fact central in $G/W$), and $G/R$ is semisimple. Let us denote $G/R$ by  $H$ for now, and $\pi$ the surjective homomorphism from $G/W$ to $H$. Let $H_{1},..,H_{t}$ be given for $H$ by Lemma 2.4, namely the $H_{i}$ are definable, definably almost simple and $H$ is their (almost direct) product. Let $C_{1}$ be the product of those $H_{i}$ which are definably compact, and $D_{1}$ the product of the rest.  So $G/R =  H$ is the almost direct product of the semisimple definable groups $C_{1}$ and $D_{1}$. Let $C = (\pi^{-1}(C_{1}))$. So $C$ is an extension of  the definably compact connected group $C_{1}$ by the definably compact definably connected group $R/W$, hence is also definably compact and definably connected. Note that $C$ is normal in $G/W$, and the quotient $(G/W)/C$ is an image of $D_{1}$ (with finite kernel) so is semisimple with no definably compact parts.
\end{proof}

Let us fix notation for the data obtained in the proof  above, so as to be able to refer to them in the future. $R$ denotes the solvable radical of $G$ and $W$ the maximal normal definable torsion-free subgroup of $G$ (equivalently of $R$).
\newline
$G/R$ is the semisimple part of $G$ which can be written uniquely as $C_{1}\cdot D_{1}$ (almost direct product) where $C_{1}$ is semisimple and definably compact and $D_{1}$ is semisimple with no definably compact parts (and everybody is definably connected). 
\newline
We have the exact sequence
$$1 \to R/W \to G/W \to_{\pi} G/R = C_{1}\cdot D_{1}  \to 1$$ 
 and $C$ denotes $\pi^{-1}(C_{1})$ which is the maximal normal definable, definably connected, definably compact
 subgroup of $G/W$, and we call it the {\em normal definably compact part} of $G$.
 
 Finally $(G/W)/C$ is denoted $D$ and called the {\em semisimple with no definably compact parts} part of $G$. 

Note that  $R/W$ is the connected component of the centre of $C$ and 
$$1 \to R/W \to C \to C_{1} \to 1$$  definably almost splits by results from \cite{HPP}.

\vspace{5mm}
\noindent
One natural question is whether there is a better decomposition theorem.

\begin{Definition} We will say that $G$ has a {\em good decomposition}, if, with above notation, the exact sequence 
$1 \to C \to G/W \to D \to 1$ definably almost splits, namely $G/W$ can be written as $C\cdot D_{2}$ for some definable, definably connected, subgroup $D_{2}$ of $G/W$ which is semisimple with no definably compact parts (i.e. the map $D_{2} \to D$ is surjective with finite kernel).
\end{Definition}

\begin{Lemma} The following are equivalent:
\newline
(i)  $G$ has a good decomposition.
\newline
(ii)  $\pi^{-1}(D_{1})$ is an almost direct product of  $R/W$ (the connected component of its centre) and a definable semisimple group (again necessarily without definably compact parts).
\end{Lemma}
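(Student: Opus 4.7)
The plan is to prove the two implications in turn, using the notation and setup fixed just before Definition 2.7: the exact sequence
$$1 \to R/W \to G/W \xrightarrow{\pi} G/R = C_1 \cdot D_1 \to 1,$$
the fact that $R/W$ is central in $G/W$ (being a normal, definably compact, definably connected, solvable subgroup), and the containment $R/W \subseteq C = \pi^{-1}(C_1)$.

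For (i) $\Rightarrow$ (ii), assume $G/W = C \cdot D_2$ with $D_2$ definable, definably connected, semisimple with no definably compact parts, and $D_2 \cap C$ finite. The first step is to show that $\pi(D_2) \subseteq D_1$. Consider the composition
$$D_2 \hookrightarrow G/W \xrightarrow{\pi} G/R \twoheadrightarrow (G/R)/D_1,$$
whose target is $C_1/(C_1 \cap D_1)$, a quotient of the definably compact group $C_1$ by a finite subgroup, hence definably compact. Write $D_2$ as an almost direct product of definably almost simple factors $H_1, \dots, H_s$, none of them definably compact. For each $i$, the kernel of the restriction to $H_i$ is a definable normal subgroup, hence either finite or all of $H_i$. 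In the finite case, $H_i$ would be a finite extension of a definable subgroup of a definably compact group, hence definably compact by the fact (cited in the introduction) that definable compactness is preserved under extensions, contradicting the hypothesis on $D_2$. So each $H_i$ maps trivially and $\pi(D_2) \subseteq D_1$. Now $\ker(\pi\!\!\restriction\!\! D_2) = D_2 \cap R/W \subseteq D_2 \cap C$ is finite, so $\dim \pi(D_2) = \dim D_2 = \dim D = \dim D_1$; since $\pi(D_2)$ is definably connected and $D_1$ is definably connected of the same dimension, $\pi(D_2) = D_1$. Hence $(R/W) \cdot D_2$ surjects onto $D_1$ under $\pi$ and contains $\ker\pi = R/W$, so equals $\pi^{-1}(D_1)$. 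The product is almost direct because $R/W \subseteq C$ forces $(R/W) \cap D_2$ to be finite, and $R/W$ is central.

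For (ii) $\Rightarrow$ (i), assume $\pi^{-1}(D_1) = (R/W) \cdot S$ is an almost direct product with $S$ a definable semisimple group. Replacing $S$ by its definably connected component we may assume $S$ is definably connected; it remains semisimple, and has no definably compact parts since $S$ is a finite central extension of $\pi(S) = D_1$ (finite because $S \cap R/W$ is finite), so any definably compact factor of $S$ would persist as a definably compact factor of $D_1$. Set $D_2 := S$. Applying $\pi$ to the decomposition $\pi^{-1}(D_1) = (R/W) \cdot S$ gives $\pi(S) = D_1$; therefore $\pi(C \cdot D_2) = C_1 \cdot D_1 = G/R$, and since $\ker\pi = R/W \subseteq C \subseteq C \cdot D_2$, we conclude $C \cdot D_2 = \pi^{-1}(G/R) = G/W$. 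Finally, $\pi(D_2 \cap C) \subseteq D_1 \cap C_1$ is finite and $\ker(\pi\!\!\restriction\!\! D_2) = D_2 \cap R/W$ is finite (from almost directness of $(R/W) \cdot S$), so $D_2 \cap C$ is finite, giving the good decomposition.

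The only nontrivial step is the opening move of (i) $\Rightarrow$ (ii): that a definably almost simple, not definably compact group admits no nontrivial definable homomorphism into a definably compact group. This reduces cleanly to the finite-extension fact, so the main content is really the bookkeeping with the exact sequence, centrality of $R/W$, and a dimension count; the converse direction is essentially formal once one notices that $\pi(S) = D_1$ and that the two finite subgroups controlling $D_2 \cap C$ are finite for different, elementary reasons.
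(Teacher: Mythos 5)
Your proof is correct and follows the same route as the paper, which simply declares the equivalence ``clear'' on the grounds that $G/W$ is the almost direct product of $C$ and some $D_{2}$ if and only if $\pi^{-1}(D_{1})$ is the almost direct product of $R/W$ and that same $D_{2}$. You have merely filled in the details the paper leaves implicit (notably that $\pi(D_{2})\subseteq D_{1}$, via the fact that a definably almost simple non--definably compact group admits no nontrivial definable map to a definably compact group), and these details check out.
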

\begin{proof}
This is clear, because  $G/W$ will the almost direct product of $C$ and some $D_{2}$ if and only if  $\pi^{-1}(D_{1})$ is the almost direct product of $R/W$ and $D_{2}$.
\end{proof}

Hence the existence of good decompositions depends on the definable almost splitting of central extensions of semisimple groups without definably compact parts by definable compact groups.

\begin{Remark}  $G$ has a good decomposition in either of the cases:
\newline
(i) $G$ is linear, namely a definable, in $M$, subgroup of some $GL(n,K)$, or
\newline
(ii) $G$ is algebraic, namely of the form  $H(K)^{0}$ for some algebraic group $H$ defined over $K$. 
\end{Remark}
\begin{proof} In fact in both cases (i) and (ii), we point out that $G$ has a {\em definable Levi decomposition}, namely $G$ is an almost semidirect product of its solvable radical $R$ and a definable semisimple group, and this clearly implies that $G$ has a good decomposition. When $G$ is linear this is Theorem 4.5 of \cite{PPSIII}. 
\newline
Suppose now that $H$ is a connected algebraic group defined over $K$, and $G = H(K)^{0}$. We have Chevalley's theorem for $H$ yielding the following exact sequence of connected algebraic groups defined over $K$:
\newline
$$ 1 \to  L \to H \to_{f} A \to 1$$ where $L$ is linear and $A$ is an abelian variety. Then 
$f(G)$ is a connected semialgebraic subgroup of $A(K)$ so is definably compact and commutative, and the semialgebraic connected component of the group of $K$-points of $L$ is a definably connected definable subgroup of $GL(n,K)$ for some $n$. Namely at the level now of definable, definably connected,  groups in $M$, we have an exact sequence
$$ 1 \to R \to G \to_{f} B \to 1$$ where  $R$ is linear, and $B$ is commutative (and definably compact). Again by \cite{PPSII}, $R$ is an almost semidirect product of a definably connected solvable group $R_{1}$ and a definable semisimple group $S$. Let $R$ be the solvable radical of $G$ (as a definable group). As $G/R$ is semisimple, $R$ must map onto $B$ under $f$, whereby $G$ is the almost direct product of $R$ and $S$. 
\end{proof}

\vspace{5mm}
\noindent
Finally in this section we give:

\begin{Example} There is a (Nash) group $G$ without a good decomposition. $T$ will be $RCF$, $M$ the standard model $(\R,+,\cdot)$, and $G$ a certain amalgamated central product of $SO_{2}(\R)$ with the universal cover of $SL_{2}(\R)$.
\end{Example}

\noindent
The model-theoretic setting is the structure $M = (\R,+,\cdot)$. Let $H$ be the definable group $SL_{2}(\R)$ consisting of $2$-by-$2$ matrices over $\R$ of determinant $1$. 
Let $\tilde H  = \widetilde{SL_{2}(\R)}$ be the universal cover of $H$. $\tilde H$ is a connected, simply connected Lie group and we have the exact sequence (of Lie groups) $$1 \to \Z \to \tilde H \to_{\pi} H\to 1$$  where $\Z$ is the discrete group $(\Z,+)$. $\tilde H$ is not definable in $M$, but we will make use of a certain description from section 8.1 of \cite{HPP} (see Theorem 8.5 there)  of $\tilde H$ as a group definable in the $2$-sorted structure $((\Z,+),M)$, and this will be used again in the next section:  
\begin{Fact} There is a $2$-cocycle $h:H\times H \to \Z$ with finite image which is moreover definable in $M$ (in the sense that for each $n\in Im(h)$, $\{(x,y)\in H\times H, h(x,y) = n\}$ is definable in $M$), and such that the set $\Z\times H$ with group structure $(t_{1},x_{1})*(t_{2},x_{2}) =  (t_{1} + t_{2} + h(x_{1},x_{2}), x_{1}x_{2})$ and projection to the second coordinate, is isomorphic to  the group $\tilde H$ with its projection $\pi$ to $H$. 
\end{Fact}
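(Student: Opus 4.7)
The plan is to construct the cocycle $h$ by choosing a semialgebraic set-section of $\pi: \tilde H \to H$, using the Iwasawa decomposition to reduce the problem to the compact factor $K = SO_2(\R)$. Recall the semialgebraic Iwasawa decomposition $H = KAN$, a semialgebraic homeomorphism $K \times A \times N \to H$, where $A$ is the group of positive diagonal matrices of determinant one and $N$ the upper unitriangular subgroup. Both $A$ and $N$ are semialgebraically homeomorphic to $\R$ and hence simply connected, so they admit unique lifts $\tilde A \cong A$ and $\tilde N \cong N$ in $\tilde H$. The preimage $\tilde K = \pi^{-1}(K)$ is the universal cover of $K \cong S^1$ with kernel the central $\Z$, and the Iwasawa decomposition lifts to a homeomorphism $\tilde K \times \tilde A \times \tilde N \to \tilde H$.

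The key step is constructing a set-section $\sigma_K: K \to \tilde K$ whose associated cocycle $h_K: K \times K \to \Z$ has finite image and $M$-definable level sets. Think of $\sigma_K(k)$ as the lift of $k = R_\theta$ at angle $\theta \in [0, 2\pi) \subset \R \cong \tilde K$, so that $h_K(R_{\theta_1}, R_{\theta_2}) = 1$ exactly when $\theta_1 + \theta_2 \geq 2\pi$ and $0$ otherwise. Although the angle function itself is not semialgebraic, this overflow-past-one-full-turn condition reduces, by case analysis on the signs of $b_1, b_2$ in $k_i = \begin{pmatrix} a_i & -b_i \\ b_i & a_i \end{pmatrix}$ and the monotonicity of cosine on each half-circle, to semialgebraic sign conditions on the entries $a_i, b_i$; for instance, when $b_1 \geq 0$ and $b_2 < 0$ the condition becomes $a_2 \geq a_1$. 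Hence $h_K$ takes only the values $0$ and $1$, and every level set is definable in $M$.

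Extend to a set-section $\sigma: H \to \tilde H$ by $\sigma(kan) = \sigma_K(k) \cdot a \cdot n$ (identifying $a, n$ with their unique lifts in $\tilde A, \tilde N$), and define $h(x, y) = \sigma(x)\sigma(y)\sigma(xy)^{-1} \in \ker\pi = \Z$. Since $\sigma$ is piecewise continuous on finitely many semialgebraic pieces of $H$ (coming from a semialgebraic cell decomposition of $K$ compatible with $\sigma_K$) and multiplication in $\tilde H$ is continuous, $h$ is locally constant on each cell of a semialgebraic cell decomposition of $H \times H$ refined to be compatible with the pieces of $\sigma$ and with the multiplication map $(x, y) \mapsto xy$. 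Hence $h$ has finite image, and each level set is a finite union of semialgebraic cells, i.e., definable in $M$. The map $\Z \times H \to \tilde H$ given by $(t, x) \mapsto t \cdot \sigma(x)$ (using that $\ker\pi = \Z$ is central in $\tilde H$) is then the required set-bijection, and it carries the cocycle-defined product $(t_1, x_1) * (t_2, x_2) = (t_1 + t_2 + h(x_1, x_2), x_1 x_2)$ to multiplication in $\tilde H$ by construction.

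The main obstacle is the explicit semialgebraic description of $h_K$ on $K = SO_2(\R)$, since the universal cover of the circle is most naturally parametrized by the transcendental angle. Once $h_K$ is written as a boolean combination of polynomial (in)equalities in the entries of $k_1, k_2$, extending from $K$ to $H$ via Iwasawa is routine because $\tilde H$ is topologically trivial over the simply-connected factor $AN$, and the finite-image and definability of $h$ follow from o-minimal cell decomposition applied to the discrete-valued function $h$.
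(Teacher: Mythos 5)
Your construction is correct, but it takes a genuinely different route from the paper's. The paper gets the section abstractly: $\tilde H$ is ind-definable in $M$ as an increasing union $\bigcup_i X_i$ of definable sets with piecewise-definable multiplication and projection, some single piece $X_i$ already surjects onto $H$, and definable Skolem functions in $RCF$ produce a \emph{definable} section $s:H\to X_i$; the cocycle $s(x)s(y)s(xy)^{-1}$ then has definable level sets for free, and finite image because it lies in the intersection of the discrete central $\Z$ with one fixed definable piece. You instead build the section by hand from the Iwasawa decomposition $H=KAN$, concentrating all the topological nontriviality in the circle factor and writing the overflow cocycle on $SO_2(\R)$ semialgebraically via sign and monotonicity conditions on matrix entries (your sample computation for $b_1\ge 0$, $b_2<0$ is right). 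What your approach buys is explicitness and independence from the locally definable cover machinery of \cite{HPP}; what it costs is the final bookkeeping, which you state a bit loosely: rather than invoking a cell decomposition ``compatible with the pieces of $\sigma$'' (note $\tilde H$ itself is not a definable object, so one cannot literally decompose with respect to maps into it), partition $H\times H$ into the eight semialgebraic pieces determined by whether each of $x$, $y$, $xy$ lies on the discontinuity locus $AN$ of $\sigma$, observe that $h$ is continuous, hence locally constant, on each piece, and conclude with the fact that a semialgebraic set has finitely many connected components, each semialgebraic. The paper's argument also transfers verbatim to other semisimple groups, whereas yours is tailored to $SL_2$.
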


Although not needed, let us say a few words of where the cocycle $h$ comes from, referring to \cite{HPP} for more details. The group $\tilde H$ is naturally ind-definable in $M$, namely as an increasing union $\bigcup_{i}X_{i}$ of definable sets with group operation and projection $\pi$ to $H$ piecewise definable. For some $i$, the restriction of $\pi$ to $X_{i}$ is surjective and as $M$ has Skolem functions there is a definable section $s:H \to X_{i}$ of $\pi|X_{i}$. Define $h$ 
on $H\times H$ by $h(x,y) = s(x)s(y)s(xy)^{-1}$. Then $h$ is as required. 

\vspace{2mm}
\noindent
Let now consider the circle group $SO_{2}(\R)$ and we use additive notation for it. Let $g\in SO_{2}(\R)$ be an element of infinite order. Define a group operation $*$ on
$SO_{2}(\R) \times H$ by  $(t_{1},x_{1})*(t_{2},x_{2}) = (t_{1} + t_{2} + h(x_{1},x_{2})g, x_{1}x_{2})$. Let $G$ be the resulting group, and note that $G$ is now definable (without parameters, taking $g$ algebraic) in $M$. Note that $\{(ng,x): n\in \Z, x\in H\}$ is a subgroup of $(G,*)$ isomorphic to $\tilde H$  (with again projection on second coordinate corresponding to $\pi:\tilde H \to H$). So identifying $\langle g \rangle$ with $\Z$, we have that 
\newline
(i) $SO_{2}(\R)$ is central in $(G,*)$,
\newline
(ii) $G = SO_{2}(\R)\cdot{\tilde H}$,
\newline
(iii) $SO_{2}(\R) \cap {\tilde H} = \Z$,
\newline
and we have the exact sequence of definable, definably connected,  groups in $M$,
$$1 \to SO_{2}(\R) \to G \to H \to 1$$  (where remember $H = SL_{2}(\R)$).
\newline
$H$ is of course definably almost simple and not (definably) compact, whereas $SO_{2}(\R)$ is (definably) compact and central in $G$. To show
 that $G$ does not have a good decomposition it suffices to show that the exact sequence above does not definably almost split in $M$. In fact there is no (even abstract) subgroup $H_{1}$ of $G$ such that $SO_{2}(\R)\cap H_{1}$ is finite and  $SO_{2}(\R)\cdot H_{1} = G$, for otherwise (as $SO_{2}(\R)$ is central in $G$), the commutator subgroup $[G,G]$ is contained in $H_{1}$ so has finite intersection with $SO_{2}(\R)$. But, using (ii) above and the fact that $\widetilde{SL_{2}(\R)}$ is perfect, $[G,G] = \tilde H$ and so has infinite intersection with $SO_{2}(\R)$, a contradiction.
We have completed the exposition of Example 2.10.

\vspace{5mm}
\noindent
In the next section an elaboration of the above analysis will show that passing to a saturated elementary extension, $G^{00} \neq G^{000}$.

\begin{Remark} A definably connected group $G$ with a good decomposition does not have necessarily a definable Levi decomposition. 
\end{Remark}
\begin{proof}
If one replaces $SO_2(\R)$ with $(\R, +)$ in Example 2.10, then one obtains a group with a good decomposition ($G/W = SL_2(\R)$), but without a definable Levi decomposition (for the same reason as in Example 2.10).

\end{proof}

\section{$G^{00}$, $G^{000}$ and the examples}
We will first repeat the definitions and geneses of the various notions of ``connected components" of a definable group. 
To begin with let $T$ be an arbitrary complete theory.  We can identify a definable set with the formula $\phi(x)$ which defines it, or rather the functor taking $M$ to $\phi(M)$ from the category $Mod(T)$ (of models of $T$ with elementary embeddings) to $Set$ given by that formula. If the formula has parameters from a set $A$ in a given model of $T$, then the functor is from $Mod(Th(M,a)_{a\in A})$ to $Set$. Likewise for type-definable sets, and also hyperdefinable sets  (a type-definable set quotiented by a type-definable equivalence relation). If $X$ is a type-definable set over $A\subseteq M$, then we sometimes identify $X$ with its interpretation in an $|A|^{+}$-saturated model ${\bar M}$ containing $M$. If $X$ is a type-definable (over $A$) set, defined by partial type $\Phi(x)$ and $E$ a type-definable (over $A$) equivalence relation on $X$ given by partial type $\Psi(x,y)$ then we say that $X/E$ is ``bounded" if  $|\Phi(N)/\Psi(N)|$ is bounded as the model $N$ (containing $A$)  varies. If $X/E$ is bounded it is not hard to see that $|\phi(N)/\Psi(N| \leq 2^{|T|+|A|}$ for all $N$, and if $N_{1}< N_{2}$ are $|A|^{+}$-saturated models containing $A$ then the natural embedding of $\Phi(N_{1})/\Psi(N_{1})$ in $\Phi(N_{2})/\Psi(N_{2})$ is a bijection. In fact, assuming $X/E$ bounded, for a fixed model $M$ containing $A$, and $N$ a saturated model containing $M$, the $E$-class of some $b\in X$ depends only on $tp(b/M)$, hence the map $X\to X/E$ factors through the space  $S_{\phi}(M)$ of complete types over $M$ extending $\phi(x)$.
Equipped with the quotient topology (which we call the logic topology), $X/E$ is a compact Hausdorff space. 

Now suppose that the equivalence relation $E$ on $X$ is given instead by a possibly infinite disjunction $\bigvee_{i} \Psi_{i}(x,y)$ of partial types over $A$ (i.e. working in a saturated model ${\bar M}$, is $Aut({\bar M}/A)$-invariant, or as we often just say $A$-invariant). The whole discussion above regarding boundedness of $E$ goes through in this more general case, including the fact that the map $X \to X/E$ factors through the type space $S_{\Phi}(M)$ (for $M$ any model containing $A$) However the quotient topology on $X/E$ is no longer Hausdorff, and it is probably better to view $X/E$ as an object of descriptive set theory or maybe even noncommutative geometry.

Let us first consider the case where $X$ is a sort of $T$. Then given any (small) set $A$ of parameters, there is a finest bounded type-definable over $A$ equivalence relation on $X$ which we call $E_{X,A,KP}$. Likewise there is finest bounded $A$-invariant equivalence relation on $X$ which we call $E_{X,A,L}$. For $a\in X$, the $KP$-strong type of $a$ over $A$ is precisely the $E_{X,A,KP}$-class of $a$, and the Lascar strong type of $a$ over $A$ is precisely the $E_{X,A,L}$-class of $a$. There is also of course the usual strong type of $a$ over $A$, which is the $E_{X,A,Sh}$-class of $a$ where $E_{X,A,Sh}$ is the intersection of all $A$-definable equivalence relations on $X$ with finitely many classes. In stable theories all these strong types coincide. In \cite{CLPZ} an example was given where $KP$-strong types differ from Lascar strong types. More (natural) examples will be given later.

We now consider the case where $X = G$ is a definable group, and $E$ comes from an appropriate subgroup of $G$.  So we assume $G$ to be a group definable in a saturated model ${\bar M}$, and we fix a small set $A$ of parameters over which $G$ is defined. $G_{A}^{0}$ denotes the intersection of all $A$-definable subgroups of $G$ of finite index. It is clearly a type-definable (normal) subgroup of $G$ of bounded index, and equipped with the logic topology the quotient $G/G_{A}^{0}$ is a profinite group. We let $G_{A}^{00}$ denote the smallest type-definable over $A$ subgroup of $G$ of bounded index. It is also normal, the quotient $G/G_{A}^{00}$, equipped with the logic topology is a compact (Hausdorff) topological group, and $G/G_{A}^{0}$ is its maximal profinite quotient. Finally $G_{A}^{000}$ is the smallest $A$-invariant subgroup of $G$, of bounded index, which is again normal. 
We have that
$G_{A}^{000} \leq  G_{A}^{00} \leq  G_{A}^{0}$. 

A well-known construction (see \cite{Gismatullin-Newelski}) links these different ``connected components" of definable groups with the various strong types. We give a simplified version: Let $T$ be a complete theory such that $dcl(\emptyset)$ is a model. Let $G$ be a $\emptyset$-definable group. 
Adjoin a new sort $S$ together with a regular action of $G$ on $S$. Call the new theory $T'$. Clearly no ``new structure" is imposed on $T$. Work in a saturated model of $T'$. Then 
\begin{Fact} (i) $E_{S,\emptyset,Sh}$ is the orbit equivalence relation on $S$ induced by $G_{\emptyset}^{0}$.
\newline 
(ii) $E_{S,\emptyset,KP}$ is the orbit equivalence relation on $S$ induced by $G_{\emptyset}^{00}$, and 
\newline
(iii) $E_{S,\emptyset,L}$ is the orbit equivalence relation on $S$ induced by $G_{\emptyset}^{000}$.
\end{Fact}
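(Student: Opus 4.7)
The plan is to handle the three parts uniformly by transferring an equivalence relation on $\bar S$ to $\bar G$ via a basepoint and recognizing its identity class as a subgroup of $\bar G$. The starting observation is that, because the action of $G$ on $S$ is regular and $T'$ imposes no further structure on $S$ beyond that action, ${\rm Aut}(\bar M/\bar G)$ acts transitively on $\bar S$: writing each element of $\bar S$ uniquely as $hs_0$ for $h\in\bar G$, the map $\sigma$ that fixes $\bar G$ pointwise and sends $hs_0\mapsto ht$ is readily checked to be an automorphism of the two-sorted structure, since $\sigma(g\cdot hs_0)=ght=g\cdot\sigma(hs_0)$. So for any $s_0,t\in\bar S$ some $\sigma\in{\rm Aut}(\bar M/\bar G)$ sends $s_0$ to $t$.

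Fix one of the three cases and let $E$ be an equivalence relation on $\bar S$ that is $\emptyset$-definable with finitely many classes (case (i)), $\emptyset$-type-definable with boundedly many classes (case (ii)), or $\emptyset$-invariant with boundedly many classes (case (iii)). Fixing a basepoint $s_0$, define $E^*$ on $\bar G$ by $g_1\,E^*\,g_2\Leftrightarrow g_1 s_0\,E\,g_2 s_0$. The crux, and really the only nontrivial step, is to show that $E^*$ does not depend on the choice of $s_0$: using an automorphism from the previous paragraph that fixes $\bar G$ pointwise and sends $s_0$ to $t$, together with the fact that $E$ is preserved by such automorphisms (true in all three cases), we get $g_1 s_0\,E\,g_2 s_0 \Leftrightarrow g_1 t\,E\,g_2 t$. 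Consequently $E^*$ is definable over $\emptyset$ with the same degree of definability as $E$, and one verifies directly that $E^*$ is a right-translation-invariant equivalence relation on $\bar G$ whose classes biject naturally with the $E$-classes on $\bar S$. Setting $H=H_E:=\{g\in\bar G\,:\,g\,E^*\,e\}$, a short check using reflexivity, symmetry, transitivity of $E^*$ together with right-invariance shows that $H$ is a subgroup of $\bar G$ whose right cosets are the $E^*$-classes; so $H$ is $\emptyset$-definable of finite index, $\emptyset$-type-definable of bounded index, or $\emptyset$-invariant of bounded index, as appropriate.

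This forces $G^{0}_{\emptyset}\subseteq H$ in case (i), $G^{00}_{\emptyset}\subseteq H$ in case (ii), and $G^{000}_{\emptyset}\subseteq H$ in case (iii); hence the orbit of any $s\in\bar S$ under the relevant component lies inside a single $E$-class, i.e.\ the orbit equivalence relation of the relevant component refines $E$. Intersecting over all admissible $E$ gives one containment in each of (i)--(iii). The reverse containment is formal: $\sim_{G^{00}_{\emptyset}}$ and $\sim_{G^{000}_{\emptyset}}$ are themselves $\emptyset$-type-definable (resp.\ $\emptyset$-invariant) with boundedly many classes, so they contain the corresponding ``finest'' relations $E_{S,\emptyset,\mathrm{KP}}$ and $E_{S,\emptyset,L}$; and $\sim_{G^{0}_{\emptyset}}$ is the intersection $\bigcap_H \sim_H$ over finite-index $\emptyset$-definable subgroups $H$, each $\sim_H$ being $\emptyset$-definable with finitely many classes, so it contains the finer intersection $E_{S,\emptyset,\mathrm{Sh}}$. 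The main obstacle is thus the basepoint-independence of $E^*$, which is exactly where the transitivity of ${\rm Aut}(\bar M/\bar G)$ on $\bar S$ is indispensable; all the remaining steps amount to routine bookkeeping.
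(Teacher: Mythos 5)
The paper does not prove this statement: it is recorded as a \emph{Fact} and attributed to the construction in Gismatullin--Newelski, so there is no in-paper argument to compare yours against. Your proof is correct and is essentially the standard one: the transitivity of the pointwise stabilizer of the home sort on $\bar S$ (valid because the action is regular and $S$ carries no structure beyond the action) gives basepoint-independence of $E^*$, whence $E^*$ is right-invariant with identity class a subgroup $H$ of the appropriate definability/invariance type and index, forcing the relevant connected component into $H$; the reverse inclusions are immediate since the orbit equivalence relations of $G^{0}_{\emptyset}$, $G^{00}_{\emptyset}$, $G^{000}_{\emptyset}$ are themselves admissible relations on $S$ (using regularity to define the map $(s,t)\mapsto g$ with $gs=t$). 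The one step you pass over quickly is the identification of ``$\emptyset$-definable / $\emptyset$-type-definable / $\emptyset$-invariant'' as computed in $T'$ with the same notions in $T$, which is what you need to conclude $G^{0}_{\emptyset}\subseteq H$, etc.: this rests on the surjectivity of the restriction map $\mathrm{Aut}(\bar M')\to\mathrm{Aut}(\bar M)$ (any automorphism of the home sort extends via a basepoint) together with the ``no new structure'' claim, and is exactly where the hypotheses on $T'$ (and the paper's remark that no new structure is imposed) do their work; it would be worth one explicit sentence, but it is not a gap.
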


Hence, if for example $G^{00} \neq G^{000}$, then we obtain in this way examples where $KP$-strong type differs from Lascar strong type.


There are plenty of examples where $G_{\emptyset}^{0} \neq G_{\emptyset}^{00}$ (such as definably compact groups definable in $o$-minimal structures). However, until now no examples had been worked out where $G_{\emptyset}^{00} \neq G_{\emptyset}^{000}$.

\noindent
We say, for example, that ``$G^{0}$ exists" if for some set $A$ of parameters, for all $B\supseteq A$, $G_{A}^{0}= G_{B}^{0}$. If $G^{0}$ exists, then,
assuming $G$ is $\emptyset$-definable, we can take $A$ to be $\emptyset$ and we define $G^{0}$ to be 
$G_{\emptyset}^{0}$.  Likewise for $G^{00}$ and $G^{000}$.  If $G^{000}$ exists then so do  $G^{00}$ and $G^{0}$. 
Gismatullin \cite{Gismatullin} proves, following work of Shelah, that if $T$ has $NIP$ then for any definable group $G$,
$G^{000}$ exists. When $T$ is stable,  $G^{0} = G^{00} = G^{000}$. For $T$ simple, $G^{0}$ may not exist, but it is known that for any $A$, $G_{A}^{00} = G_{A}^{000}$. It is conjectured (for $T$ simple) that $G_{A}^{0} = G_{A}^{00}$ and this is known in the supersimple case (\cite{Wagner}). 

When we are working with either $o$-minimal theories, or closely related $NIP$ theories, we just say $G^{0},G^{00}$, $G^{000}$. 

\vspace{2mm}
\noindent
We now give examples of $G$ (including $o$-minimal examples) where $G^{00} \neq G^{000}$.  In the sequel to this paper we will make a systematic analysis of $G^{00}$ and $G^{000}$ in the $o$-minimal case, showing that the behaviour in Theorem 3.3 for example is typical. 

\begin{Theorem} Let $T = Th(\widetilde{SL_{2}(\R)},\cdot)$. Then $T$ has $NIP$, and if $(G,\cdot)$ denotes a saturated model, then 
$G^{00}\neq G^{000}$. In fact $G = G^{00}$ and $G/G^{000}$ is isomorphic to  ${\widehat \Z}/\Z$ where $\widehat \Z$ is the profinite completion of  $(\Z,+)$.
\end{Theorem}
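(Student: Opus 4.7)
The plan is in three parts: NIP via interpretation, reduction to the center, and parallel computations of $G^{00}$ and $G^{000}$.

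\textbf{NIP and setup.} By Fact~3.2, $\widetilde{SL_2(\R)}$ is interpretable in the two-sorted structure $((\Z,+),(\R,+,\cdot))$, whose sorts are respectively stable and $o$-minimal and hence NIP; since NIP passes through multi-sorted presentations, interpretations, and reducts, $T$ is NIP. The center $Z:=Z(G)$ is $\emptyset$-definable (as the centralizer of $G$) and $Z\equiv(\Z,+)$, while the quotient $G/Z\equiv PSL_2(\R)\cong PSL_2(K)$ for a saturated real closed field $K$ is abstractly simple and infinite, so $(G/Z)^{00}=(G/Z)^{000}=G/Z$. Hence every bounded-index normal subgroup $H\leq G$ (type-definable or $\mathrm{Aut}$-invariant) satisfies $H\cdot Z=G$ and $G/H\cong Z/(Z\cap H)$ abelian. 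Moreover, $\widetilde{SL_2(\R)}$ has no proper subgroup of finite index (its perfectness together with the descending commutator series), which is first-order bounded for each standard $n$, so $G=G^0$ and thus $G/G^{00}$ is a compact connected abelian topological group. Perfectness also places every standard $n\in\Z\subset Z$ in $[G,G]\subseteq G^{000}\subseteq G^{00}$.

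\textbf{$G=G^{00}$.} I claim the bounded-index type-definable subgroups of $Z$ in the structure induced from $G$ coincide with the pure-$(\Z,+)$-type-definable ones $\bigcap_{n\in S}nZ$ for $S\subseteq\N$, whose quotients are profinite quotients of $\widehat{\Z}$. The only new definable subsets of $Z$ coming from non-pure-group formulas in $G$ --- chiefly the commutator-length strata $\{c\in Z:c\text{ is a product of }k\text{ commutators in }G\}$ --- are, by a Rademacher-type bound together with elementarity, specific finite sets of standard integers, and so do not contribute any new bounded-index subgroups. Therefore $Z/(Z\cap G^{00})$ is profinite. But $G/G^{00}\cong Z/(Z\cap G^{00})$ is also connected, and a connected profinite group is trivial, so $G=G^{00}$. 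This structural claim about the induced definable sets on $Z$ is the main obstacle in the argument.

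\textbf{$G/G^{000}\cong\widehat{\Z}/\Z$.} The cocycle class of $1\to Z\to G\to PSL_2(K)\to 1$ in $H^2(PSL_2(K);Z)$ is represented by the standard-$\Z$-valued cocycle of Fact~3.2, and so it vanishes after pushing forward via $Z\twoheadrightarrow\widehat{\Z}/\Z$. The pushout central extension by $\widehat{\Z}/\Z$ therefore splits; since $\operatorname{Hom}(PSL_2(K),\widehat{\Z}/\Z)=0$ ($PSL_2(K)$ being simple and perfect), the splitting is canonical, yielding a canonical $\mathrm{Aut}$-invariant surjective homomorphism $\psi:G\twoheadrightarrow\widehat{\Z}/\Z$. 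Its kernel is $\mathrm{Aut}$-invariant of bounded index $2^{\aleph_0}$ but not type-definable (since standard $\Z\subset\widehat{\Z}$ is not type-definable in $Z$), so $G^{000}\subseteq\ker\psi$. For the reverse inclusion note that $\ker\psi\cap Z=\Z+\bigcap_n nZ$: standard $\Z\subseteq G^{000}$ by perfectness, and $\bigcap_n nZ\subseteq G^{000}$ because $L\cdot G^{000}$ with $L=\bigcap_n nZ\subseteq Z$ central and type-definable is an $\mathrm{Aut}$-invariant bounded-index subgroup of $G$, so $L\cdot G^{000}=G^{000}$ by minimality of $G^{000}$. Combined with $\ker\psi\cdot Z=G$ this forces $G^{000}=\ker\psi$, and $G/G^{000}\cong\widehat{\Z}/\Z$; together with $G=G^{00}$ this gives $G^{00}\neq G^{000}$ as claimed.
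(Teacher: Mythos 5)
Your overall architecture is right and matches the paper's in outline (reduce everything to the centre via abstract simplicity of $PSL_2(K)$, use perfectness to put $\widetilde{SL_2(K)}$ and hence the standard $\Z$ inside $[G,G]\subseteq G^{000}$, and identify the answer as $\widehat\Z/\Z$), but there are two genuine gaps, both stemming from the same missing idea. The first is the one you flag yourself: your proof of $G=G^{00}$ rests on classifying all type-definable bounded-index subgroups of $Z$ in the structure induced from $(G,\cdot)$, and the justification offered (commutator-length strata controlled by a ``Rademacher-type bound'') addresses only one special family of formulas; arbitrary parametrized group-language formulas restricted to $Z$ are not controlled by this, so the claim is unproven. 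The paper avoids the issue in two ways you should adopt. First, it works in the two-sorted structure $((\Z,+),(\R,+,\times))$, where the induced structure on the $\Z$-sort $\Gamma$ is visibly just the stable pure group, so $\Gamma^{000}=\Gamma^{00}=\Gamma^{0}=\bigcap_n n\Gamma$ is immediate; the computation is then transferred to the pure-group reduct by observing that the candidate $G^{000}=(\bigcap_n n\Gamma)\cdot[G,G]$ is invariant under abstract group automorphisms, and that connected components can only grow when passing to a reduct. Second, and more importantly, for $G=G^{00}$ no classification is needed at all: $G^{00}\cap\Gamma$ is type-definable and contains $\Gamma^{0}\cdot\Z$, hence its image in $\Gamma/\Gamma^{0}\cong\widehat\Z$ is a \emph{closed} subgroup containing the \emph{dense} subgroup $\Z$, so equals $\widehat\Z$ and $G^{00}\cap\Gamma=\Gamma$. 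This density argument replaces your ``profinite and connected, hence trivial'' step and does not require knowing the induced structure beyond $\Gamma/\Gamma^{0}\cong\widehat\Z$.

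The second gap is the step ``$L\cdot G^{000}=G^{000}$ by minimality of $G^{000}$'' for $L=\bigcap_n nZ$. Minimality of $G^{000}$ gives only $G^{000}\subseteq L\cdot G^{000}$, which is vacuous; it cannot give $L\subseteq G^{000}$. (And the inclusion is not formal: a saturated model of $Th(\Z,+)$ has plenty of bounded-index subgroups not containing $\bigcap_n n\Gamma$, since that intersection is a large divisible group.) The correct argument is that $G^{000}\cap Z$ is an $\mathrm{Aut}$-invariant bounded-index subgroup of $Z$, hence contains $Z^{000}$ computed with respect to the structure induced on $Z$, and this equals $\bigcap_n nZ$ by stability of $Th(\Z,+)$ --- which again requires knowing the induced structure on the centre, i.e., the two-sorted detour. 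Once these two points are repaired, your cocycle-pushout description of $\psi:G\to\widehat\Z/\Z$ is a correct (and slightly fancier) packaging of the paper's identification $G/G^{000}\cong\widehat\Z/\Z$, since $\ker\psi=(\bigcap_n nZ)\cdot[G,G]$ is manifestly $\mathrm{Aut}$-invariant.
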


\begin{proof} From Fact 2.11  and the discussion following it (taken from \cite{HPP}) the group $(\widetilde{SL_{2}(\R)},\cdot)$ is interpretable (with parameters) in the $2$-sorted structure  
\newline
$$((\Z,+), (\R,+,\times))$$  
(where there are no additional basic relations between the sorts). As $Th(\Z,+)$ is stable (in fact superstable of $U$-rank $1$) and $RCF$ has $NIP$ clearly the $2$-sorted structure has $NIP$ too, and hence the interpretable group
$(\widetilde{SL_{2}(\R)},\cdot)$ has $NIP$.

\noindent
In fact we will work with $T = Th((\Z,+),(\R,+,\times))$ (rather than $Th(\widetilde{SL_{2},\R)},\cdot)$, and will point out how the results are also valid for the ``pure group structure". 

\noindent
Let $M$ denote  $(\R,+,\cdot)$, and $N$ denote the $2$-sorted structure $((\Z,+),(\R,+,\times))$. Then a saturated model ${\bar N}$ of $T$ will be of the form $((\Gamma,+), \bar M)$ where ${\bar M}$ is a saturated real closed field $(K,+,\cdot)$ say, and $(\Gamma,+)$ is a saturated elementary extension of $(\Z,+)$.  
Let now $G$ denote the interpretation in the big model ${\bar N}$ of the formula(s) defining the group $\widetilde{SL_{2}(\R)}$ in $N$. So clearly, using Fact 2.11,  
$G$ has universe  the definable set $\Gamma \times SL(2,K)$  and group operation given by  $(t_{1},x_{1})*(t_{2},x_{2}) = (t_{1} + t_{2} + h(x_{1},x_{2}),x_{1}x_{2})$.
Here $h(x_{1},x_{2})\in \Z < \Gamma$ so everything makes sense. We write the group $G$ as $(G,\cdot)$ hopefully without ambiguity.
We identify the group $\Gamma$ with the subgroup  $(\{(t,1):t\in\Gamma\},*)$ of $G$ via the (definable) isomorphism $\iota$ which takes $t\in \Gamma$ to $(t-h(1,1),1)\in G$.   As such $\Gamma$ is central in $G$ and we have the exact sequence

  \begin{equation}1 \to\Gamma \to G \to SL(2,K)\to 1 \end{equation}
 
We again identify $\Z < \Gamma$ with the subgroup  $(\{(t,1):t\in \Z\},*)$ of $G$ via $\iota$. 
Note that  $(\{(t,x):t\in \Z, x\in SL(2,K)\},*)$ is a (non definable) subgroup of $G$, which we will take the liberty to call  
$\widetilde{SL_{2}(K)}$. (In fact it will identify with the so-called $o$-minimal universal cover of  $SL(2,K)$, an ind-definable group in ${\bar M}$, but this fact will not be needed.). From (1) we obtain:

\begin{equation} 1 \to \Z \to \widetilde{SL_{2}(K)} \to SL_{2}(K) \to 1  \end{equation}
(where only $SL_{2}(K)$ is definable). 

So with the above identifications we write 

\begin{equation}  G = \Gamma \cdot \widetilde{SL_{2}(K)} \end{equation}
   where the subgroup $\Gamma$ of $G$ is definable and central, the subgroup $\widetilde{SL_{2}(K)}$ of $G$ is not definable and  $\Z = \Gamma \cap \widetilde{SL_{2}(K)}$.

\vspace{2mm}
\noindent
We now aim to understand $G^{000}$ in terms of this decomposition (even though $\widetilde{SL_{2}(K)}$ is not definable).  

\vspace{2mm}
\noindent
{\em Claim 1.} $\Gamma^{000} = \Gamma^{00} = \Gamma^{0} = \bigcap_{n}n\Gamma$, and is contained in $G^{000}$.
\newline
{\em Proof of Claim 1.} $\Gamma$ (as a group definable in $N$) is simply a model of $Th(\Z,+)$ which is stable, so we have equality of the various connected components and $\Gamma^{0}$ is the intersection of all definable subgroups of finite index which is as described. Also $G^{000}\cap \Gamma$ clearly contains $\Gamma^{000}$.
\newline
{\em End of proof.}

\vspace{2mm}
\noindent
{\em Claim 2.}  $\widetilde{SL_{2}(K)}$ is perfect, namely equals its own commutator subgroup.
\newline
{\em Proof of Claim 2.}  Because of the exact sequence (2) above and the well-known fact that $SL_{2}(K)$ is perfect, it is enough to show that the subgroup $\Z$ of $\widetilde{SL_{2}(K)}$  is contained in 
$[\widetilde{SL_{2}(K)},\widetilde{SL_{2}(K)}]$. But this follows immediately because $\Z$ is contained in the (naturally embedded) subgroup  $\widetilde{SL_{2}(\R)}$ of $\widetilde{SL_{2}(K)}$, and again $\widetilde{SL_{2}(\R)}$ is known to be perfect. 
\newline
{\em End of proof.}

\vspace{2mm}
\noindent
{\em Claim 3.}  $\widetilde{SL_{2}(K)} \subseteq G^{000}$.
\newline
{\em Proof of Claim 3.} Let $H = \widetilde{SL_{2}(K)} \cap G^{000}$. $H$ is then a normal subgroup of $\widetilde{SL_{2}(K)}$ of index at most the continuum.
Hence $\pi(H)$ the image of $H$ under $\pi:\widetilde{SL_{2}(K)} \to SL_{2}(K)$ is an infinite normal subgroup of $SL_{2}(K)$. As $SL_{2}(K)$ is
simple as an abstract group modulo its finite centre, and is also perfect, it follows that $\pi(H) = SL_{2}(K)$. Hence
$\widetilde{SL_{2}(K)} = \Z\cdot H$, and as $\Z$ is central, the commutator subgroup of $\widetilde{SL_{2}(K)}$ is contained in $H$. By Claim 2, $H = \widetilde{SL_{2}(K)}$, as required.
\newline
{\em End of proof.}

\vspace{2mm}
\noindent
(Note that we have shown that $\widetilde{SL_{2}(K)}$ has {\em no} proper normal subgroup not contained in its centre.)

\vspace{2mm}
\noindent
{\em Claim 4.}  $[G,G] = \widetilde{SL_{2}(K)}$
\newline
{\em Proof of Claim 4.} 
By the description of $G$ in (3), $[G,G]$ is a subgroup of $\widetilde{SL_{2}(K)}$. By Claim 2, we get equality.
\newline
{\em End of proof.}

\vspace{2mm}
\noindent
{\em Claim 5.}  $G^{000} = \Gamma^{0}\cdot \widetilde{SL_{2}(K)}$
\newline
{\em Proof of Claim 5.}  By Claims 1 and 3, $G^{000}$ contains $\Gamma^{0}\cdot \widetilde{SL_{2}(K)}$.
On the other hand  $\Gamma^{0}\cdot \widetilde{SL_{2}(K)}$ is clearly of bounded index in $G$, and using Claim 4 is also clearly invariant under automorphisms of $N$ which fix the parameters defining $G$.
So we get equality.  In fact note at this point that $\Gamma^{0}\cdot \widehat{SL_{2}(K)}$ is also invariant under automorphisms of  the structure $(G,\cdot)$, so  coincides with $G^{000}$ in this reduct. 
\newline
{\em End of proof.}

\vspace{2mm}
\noindent
{\em Claim 6.}  $G = G^{00}$.
\newline
{\em Proof of claim 6.}  By Claim 5 and (3), $G^{000} \cap \Gamma = \Gamma^{0}\cdot \Z$. So as $G^{000}\subseteq G^{00}$, $G^{00}\cap\Gamma$ contains  $\Gamma^{0}\cdot\Z$ {\em and must} type-definable. This can be directly seen to be a contradiction unless $G^{00}\cap \Gamma = \Gamma$. For example $\Gamma/\Gamma^{0} = \widehat\Z$ the profinite completion of 
$\Z$ and the subgroup $\Z$ of $\Gamma$ goes isomorphically to the dense subgroup $\Z$ of $\hat\Z$ under the quotient map. But then under this quotient map $G^{00}\cap \Gamma$ must go to a closed subgroup of $\widehat\Z$ which contains the dense subgroup $\Z$, hence must go to $\widehat Z$ and so $G^{00}\cap \Gamma = \Gamma$. 
\newline
{\em End of proof.}

\vspace{2mm}
\noindent
We have already seen in the proof of Claim 6 that $G/G^{000}$ is naturally isomorphic to $\widehat\Z/\Z$. So together with Claims 5 and 6 this completes the proof of Theorem 3.2.
\end{proof}

\vspace{5mm}
\noindent 
We now give a similar $o$-minimal example. We will refer at some point in the proof to the fact that for a definably compact group $H$ (such as $SO_{2}$) in a saturated $o$-minimal structure, $H^{00} = H^{000}$ which follows from results in \cite{NIPII}. 
\begin{Theorem}  Let $T$ be $RCF$, and $G$ the group from Example 2.10.  Let $G_{1}$ be $G({\bar M})$ for ${\bar M} = (K,+,\cdot)$ a saturated model. Then $G_{1} = G_{1}^{00}$, but $G_{1} \neq G_{1}^{000}$ and in fact 
$G_{1}/G_{1}^{000}$ is naturally isomorphic to the quotient of the circle group  $SO_{2}(\R)$ by a dense cyclic subgroup.
\end{Theorem}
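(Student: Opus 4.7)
My plan is to mirror the argument of Theorem 3.2 line-for-line, with the circle group $\Sigma := SO_2(K)$ playing the role that $\Gamma$ (the saturated elementary extension of $(\Z,+)$) played there. The crucial structural fact, parallel to equation (3), is that $G_1 = \Sigma \cdot \widetilde{SL_2(K)}$ with $\Sigma$ a definable central subgroup, $\widetilde{SL_2(K)} := \{(ng, x) : n \in \Z,\ x \in SL_2(K)\}$ a non-definable subgroup sitting in the exact sequence $1 \to \Z \to \widetilde{SL_2(K)} \to SL_2(K) \to 1$, and $\Sigma \cap \widetilde{SL_2(K)} = \langle g \rangle \cong \Z$ (where $g \in SO_2(\R)$ is the fixed element of infinite order from Example 2.10).

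The claims about $\widetilde{SL_2(K)}$ transfer verbatim. Namely: (a) $\widetilde{SL_2(K)}$ is perfect, because $\widetilde{SL_2(\R)}$ and $SL_2(K)$ are each perfect, exactly as in Claim 2 of 3.2; (b) $\widetilde{SL_2(K)} \subseteq G_1^{000}$, by intersecting $G_1^{000}$ with $\widetilde{SL_2(K)}$, projecting to $SL_2(K)$, using simplicity modulo centre to conclude the projection is surjective, and then applying perfectness to kill the central $\Z$; (c) $[G_1,G_1] = \widetilde{SL_2(K)}$, since commutators land in $\widetilde{SL_2(K)}$ (as $\Sigma$ is central) and $\widetilde{SL_2(K)}$ is perfect.

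Now for the abelian central piece, the relevant input, replacing the stability of $Th(\Z,+)$, is that $\Sigma$ is definably compact, hence $\Sigma^{00} = \Sigma^{000}$ by \cite{NIPII}, and the standard part map identifies $\Sigma/\Sigma^{00}$ with the classical circle group $SO_2(\R)$ as a compact topological group. In particular $\langle g \rangle \subset \Sigma$ maps isomorphically onto the dense cyclic subgroup $\langle g \rangle$ of $SO_2(\R)$. Combining (b), the centrality of $\Sigma$, and invariance under $\mathrm{Aut}(\bar M / \text{parameters})$, one checks that $\Sigma^{00} \cdot \widetilde{SL_2(K)}$ is invariant of bounded index and contains every generator of $G_1^{000}$, giving
\[
G_1^{000} = \Sigma^{00} \cdot \widetilde{SL_2(K)}.
\]

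To get $G_1 = G_1^{00}$, I apply the density argument from Claim 6 of Theorem 3.2. The subgroup $G_1^{00} \cap \Sigma$ is type-definable in $\Sigma$, so its image under the quotient map $\Sigma \to \Sigma/\Sigma^{00} \cong SO_2(\R)$ is closed; since it contains $G_1^{000}\cap\Sigma \supseteq \Sigma^{00}\cdot\langle g\rangle$, which projects onto the dense subgroup $\langle g\rangle$ of $SO_2(\R)$, it must be all of $SO_2(\R)$, so $\Sigma \subseteq G_1^{00}$. Together with $\widetilde{SL_2(K)} \subseteq G_1^{000} \subseteq G_1^{00}$ and $G_1 = \Sigma \cdot \widetilde{SL_2(K)}$, this yields $G_1 = G_1^{00}$. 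Finally,
\[
G_1/G_1^{000} = \Sigma/(\Sigma \cap G_1^{000}) = \Sigma/(\Sigma^{00}\cdot\langle g\rangle) \cong SO_2(\R)/\langle g\rangle,
\]
the quotient of the circle by a dense cyclic subgroup. The only step I see as potentially delicate is the identification $\Sigma/\Sigma^{00} \cong SO_2(\R)$ and the verification that the standard part map sends $g$ (viewed inside $\Sigma$) to itself inside $SO_2(\R)$; but this is exactly how $\Sigma^{00}$ is constructed for a definably compact abelian group in an $o$-minimal expansion of $RCF$, so it reduces to a citation rather than a calculation.
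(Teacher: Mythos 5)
Your proposal is correct and follows essentially the same route as the paper's own (sketched) proof: transporting Claims 2--5 of Theorem 3.2 to the central product $SO_2(K)\cdot\widetilde{SL_2(K)}$, replacing the stability of $Th(\Z,+)$ by $SO_2(K)^{00}=SO_2(K)^{000}$ for definably compact groups, and running the same standard-part/density argument with $\langle g\rangle$ dense in $SO_2(\R)$ to get $G_1=G_1^{00}$ and $G_1/G_1^{000}\cong SO_2(\R)/\langle g\rangle$.
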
 
\begin{proof}  The proof is more or less identical to that of Theorem 3.2, so we just give a sketch. In analogy with
(3) from the proof of 3.2  and with the same notation we have:
\newline
(*)    $G_{1}$ is a central product of its subgroups $SO_{2}(K)$ (which is definable) and $\widetilde{SL_{2}(K)}$ which is not definable, and with intersection ``$\Z$ " (an infinite cyclic subgroup $\langle g \rangle$ of $SO_{2}(\R)< SO_{2}(K)$). 

\vspace{2mm}
\noindent
As in Claims 3 and 4 in the proof of 3.2, $G_{1}^{000}$ contains $\widetilde{SL_{2}(K)}$, and (using (*)) $[G_{1},G_{1}]  =  \widetilde{SL_{2}(K)}$. Also $G^{000} \cap SO_{2}(K)$ contains $SO_{2}(K)^{000}$ which we know to be equal to $SO_{2}(K)^{00}$. Hence we conclude that
\newline
(**) $G_{1}^{000} = SO_{2}(K)^{00}\cdot\widetilde{SL_{2}(K)}$. 

\vspace{2mm}
\noindent
Now the quotient map $SO_{2}(K) \to SO_{2}(K)/SO_{2}(K)^{00}$ identifies with the standard part map $SO_{2}(K) \to SO_{2}(\R)$ which is the identity on $SO_{2}(\R)$ and in particular on $\langle g \rangle$ (so $\langle g \rangle \cap SO_{2}(K)^{00}$ is trivial).  
\newline
By (**) $G_{1}^{00} \cap SO_{2}(K)$ is type-definable and contains $SO_{2}(K)^{00}\cdot\langle g \rangle$, so its image under the standard part map $SO_{2}(K) \to SO_{2}(\R)$ is a closed subgroup which contains the dense subgroup $\langle g\rangle$, hence has to be $SO_{2}(\R)$. So $G_{1}^{00}$ contains $SO_{2}(K)$ hence by (*) $G_{1}^{00} = G_{1}$. 
\end{proof} 

\vspace{5mm}
\noindent
As remarked earlier the above theorems provide new examples of non $G$-compact theories, i.e. where Lascar strong types differ from $KP$-strong types.
A possibly  interesting question, especially bearing in mind the above examples, is how one can or should view, naturally, $G/G^{000}$ (or even $G^{00}/G^{000}$) as a mathematical object.  For example it is an abstract group, a quasi-compact (compact but not necessarily Hausdorff) topological group, as well as a quotient of a type-space by an $F_{\sigma}$ equivalence relation. In the above examples it is, in a natural fashion, the quotient of a compact (Hausdorff) commutative group by a countable dense subgroup. We will show in the sequel that this is always the case when $G$ is definable in an $o$-minimal structure. A natural problem at this point is to find $G$ such that $G_{\emptyset}^{00}/G_{\emptyset}^{000}$ is noncommutative. Also we see, via the examples above, some relationships between universal covers and fundamental groups on the one hand, and Lascar groups on the other, and maybe the connection is more than just accidental.

\section{Definable amenability and bounded orbits}
We begin with an arbitrary theory $T$. We recall that if $M$ is a model, and $X$ a definable set in $M$, then a Keisler measure $\mu$ on $X$ (over $M$) is a finitely additive probability measure on the family of subsets of $X$ which are definable (with parameters) in $M$. A Keisler measure $\mu$ on $X$ over $M$ induces and is induced by a (unique) regular Borel probability measure on the space $S_{X}(M)$ of complete types over $M$ containing the formula defining $X$, which we sometimes identify with $\mu$. (See the introduction to Section 4 of \cite{NIPII}.) In fact a Keisler measure on $X$ over $M$ should be seen as a generalization of a complete type over $M$ (which contains the formula $``x\in X"$). 

When $X = G$ is a definable group, namely is equipped with a definable group structure, then $G(M)$ acts (on both the left and right) on the set (in fact space) of Keisler measures $\mu$ on $G$ over $M$: if $Y$ is an $M$-definable subset of $G$ then, 
$(g\cdot\mu)(Y) = \mu(g^{-1}\cdot Y)$. In particular it makes sense for a Keisler measure $\mu$ on $G$ over $M$ to be left (or right) 
$G(M)$-invariant. If $G$ has such a left $G$-invariant Keisler measure then we say that $G$ is {\em definably amenable}. 

In fact (assuming $G$ is definable without parameters), this is a property of $Th(M)$, in the sense that if $N$ is another model of $T$ and $G(N)$ is the interpretation in $N$ of the formulas defining $G$, then $G(M)$ is definably amenable iff $G(N)$ is. This follows from Proposition 5.4 of \cite{NIPI}. 

In the above context we also have the (left and right) actions of $G(M)$ on the space $S_{G}(M)$ (of completes types over $M$ concentrating on $G$). 
When $M$ is a ``big" model, and $p(x)\in S_{G}(M)$, we have the notion ``$p$ has bounded orbit" from \cite{Newelski} for example. We will take our working definition as the following rather crude one, which on the face of it depends on set theory.
\begin{Definition} Suppose $\bar\kappa$ is an inaccessible cardinal, and ${\bar M}$ a saturated model of cardinality $\bar\kappa$. 
\newline
(i) We will say that {\em $p(x)\in S_{G}({\bar M})$ has bounded orbit} if the orbit of $p$ under the (left) action of $G({\bar M})$ is of cardinality 
$<\bar\kappa$, equivalently if $Stab(p) = \{g\in G({\bar M}): gp = p\}$ is a subgroup of $G({\bar M})$ of index $< \bar\kappa$. 
\newline
(ii) We say that {\em $G$ has a bounded orbit} if some $p(x)\in S_{G}({\bar M})$ has bounded orbit.
\end{Definition}

In \cite{Newelski} some more careful definitions (see Definition 1.1 there) are given of ``bounded orbit" avoiding the dependence on set theory (and some problems are mentioned concerning the possible sizes of bounded orbits), and our results in this section hold with these more refined definitions. The same paper \cite{Newelski} states a conjecture attributed to Petrykowski:
\begin{Conjecture} If $G$ has a bounded orbit then $G$ is definably amenable.
\end{Conjecture}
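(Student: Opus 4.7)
The plan is to establish Conjecture 4.2 in the $o$-minimal case, which is the setting the paper actually commits to. The reduction rests on two general principles. First, definable amenability is preserved by extensions of definable normal subgroups by definable quotients: given $N \triangleleft G$ definable with both $N$ and $G/N$ definably amenable, a left-invariant Keisler measure on $G$ is built by pulling back an invariant measure on $G/N$ and averaging fibrewise against the invariant measure on $N$. Second, having a bounded orbit descends to any definable quotient: if $N \triangleleft G$ is definable and $p \in S_G(\bar M)$ has bounded orbit, then the image $\bar p \in S_{G/N}(\bar M)$ satisfies $\bar g \bar p = \overline{gp}$, so the $(G/N)$-orbit of $\bar p$ is a quotient of the $G$-orbit of $p$, hence bounded.

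Applying Proposition 2.6, write $W \triangleleft G$ torsion-free (solvable by Proposition 2.2, hence amenable as a discrete group, and so definably amenable), $C \triangleleft G/W$ definably compact and definably connected (hence definably amenable by \cite{NIPI}), and $D = (G/W)/C$ semisimple with no definably compact parts. By the extension principle, $G$ is definably amenable if and only if $D$ is; by the quotient principle, if $G$ has a bounded orbit then so does $D$. The conjecture for $G$ therefore reduces to proving that a definable semisimple group with no definably compact parts which has a bounded orbit must be trivial. Using Lemma 2.4 to decompose $D$ as an almost direct product of non-compact definably almost simple factors $H_i$, this in turn reduces to showing that no non-compact definably almost simple $H$ in a saturated $o$-minimal expansion of $RCF$ admits a type of bounded orbit.

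My proposed route for this key step is twofold. First, exploit the abstract simplicity of $H/Z(H)$ together with perfectness arguments in the spirit of Claims 2--5 of Theorem 3.2 to establish $H = H^{000}$. Second, use that in $NIP$ a type with bounded orbit can, after a suitable translate, be taken invariant over a small parameter set, forcing its stabilizer to contain $H^{000}$ and hence to equal $H$. The resulting $H$-invariant global type on $H$ yields, via its Dirac mass, a left-invariant Keisler measure, making $H$ definably amenable. The contradiction then comes from showing that non-compact definably almost simple $o$-minimal groups are \emph{not} definably amenable, via a uniformly definable paradoxical decomposition of $H$ produced from a definable free subgroup acting on a definable homogeneous space $H/B$.

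The main obstacle is this last point: constructing a paradoxical decomposition that is \emph{uniformly definable}, so as to rule out all Keisler measures and not merely countably additive ones. The special features of the $o$-minimal setting---in particular, the definable unipotent one-parameter subgroups of $H$ and their explicit dynamics on $H/B$---should allow the ping-pong configuration to be witnessed by a definable family of formulas, and I expect Hrushovski's observations acknowledged in the introduction to play a role here. The resulting impossibility of a left-invariant Keisler measure on non-compact definably almost simple $H$ would force $D = 1$, hence $G$ to be an extension of a definably compact definably amenable group by a torsion-free amenable one, and therefore definably amenable.
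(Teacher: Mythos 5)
Your overall architecture is the same as the paper's: decompose $G$ via Proposition 2.6, note that definable amenability is preserved under extensions (Lemma 4.4, needing $NIP$ and a \emph{definable} invariant measure on the kernel to make the fibrewise integration work) and that bounded orbits pass to definable quotients (Lemma 4.10), and reduce everything to showing that a definably almost simple, non definably compact group cannot carry a type with bounded orbit. However, your execution of this last step has two genuine gaps. First, your route from ``bounded orbit'' to ``invariant type'' relies on the claim that in $NIP$ a type with bounded orbit can, after translation, be taken invariant over a small parameter set, so that its stabilizer contains $H^{000}$. There is no justification for this: $Stab(p)$ is a bounded-index subgroup but need not be $\mathrm{Aut}(\bar M/A)$-invariant for any small $A$, and whether bounded orbits are controlled by $H^{000}$ or by $f$-genericity is precisely the open territory of Problem 4.13 and of Newelski's paper. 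The paper sidesteps this entirely with a purely group-theoretic observation (Lemma 4.9): the relevant factor $D_0$ is almost simple as an \emph{abstract} group (Corollary 6.3 of \cite{PPSIII}), and an abstractly almost simple group of size $\bar\kappa$ has \emph{no} proper subgroup of index $<\bar\kappa$ whatsoever (the kernel of the action on the coset space would be an infinite proper normal subgroup). Hence the stabilizer of a bounded-orbit type is automatically all of $D_0$, with no invariance or $H^{000}$ computation needed.

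Second, you leave the crucial non-amenability statement --- that a non definably compact, definably almost simple group admits no left-invariant Keisler measure --- as a hoped-for ``uniformly definable paradoxical decomposition,'' which you correctly flag as the main obstacle but do not construct. The paper's Lemma 4.5 does not build such a decomposition from scratch: it observes, using \cite{PPSI}, \cite{PPSIII} and the classification of real simple Lie algebras of noncompact type, that up to definable isogeny such a group contains a definable copy of $PSL(2,K)$; it then cites Remark 5.2(iv) of \cite{NIPI} for the non definable amenability of $PSL(2,K)$, and uses definable Skolem functions (Lemma 4.3, via a definable transversal to the coset space) to conclude that a definably amenable group cannot contain a non definably amenable definable subgroup. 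Without either your paradoxical decomposition or this reduction to $PSL(2,K)$, the contradiction at the heart of the argument is not established. Both gaps are repairable by substituting the paper's Lemmas 4.3, 4.5 and 4.9, after which your reduction scheme does yield Corollary 4.12.
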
 

As discussed in the introduction the motivation for this conjecture seems to be also closely connected to $G^{00}$ and $G^{000}$, in the sense that one may hope, given a global type $p$ with bounded orbit, to be able to show that $G^{00} = G^{000} = Stab(p)$ and then to $p$ to lift the Haar measure on $G/G^{00}$ to a translation invariant Keisler measure on $G$. 
The aim of this section is to prove Conjecture 4.2 in the $o$-minimal context (although we have not yet ``identified" those types with bounded orbit).  
We do this by characterizing each of the properties ``definable amenability" and ``having a bounded orbit" in terms of the decomposition given in Proposition 2.6 and concluding that they coincide. So in a sense it is a proof by inspection.

We first describe when a definable group in an $o$-minimal structure is definably amenable.  The proof is basically due to Hrushovski.

We begin with some preparatory lemmas, the first two of which are in a general context. 

\begin{Lemma} Suppose $T$ has definable Skolem functions. Let $G$ be definable and definably amenable. Then any definable subgroup $H$ of $G$ is also definably amenable.
\end{Lemma}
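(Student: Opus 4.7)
The plan is to push a given left-$G$-invariant Keisler measure $\mu$ on $G$ forward through a definable, left-$H$-equivariant retraction $\pi\colon G\to H$, thereby producing a left-$H$-invariant Keisler measure $\nu$ on $H$.

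First, using definable Skolem functions, I would produce a definable choice of representatives for the right cosets of $H$ in $G$: a definable function $r\colon G\to G$ with $r(g)\in Hg$ for every $g$, and $r(g_1)=r(g_2)$ whenever $Hg_1=Hg_2$. Given such an $r$, set $\pi(g)=g\cdot r(g)^{-1}$. Because $g$ and $r(g)$ lie in the same right coset of $H$, $\pi$ takes values in $H$; and because $Hhg=Hg$ for $h\in H$ we have $r(hg)=r(g)$, so $\pi(hg)=hg\cdot r(g)^{-1}=h\pi(g)$ and $\pi$ is left-$H$-equivariant.

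Next, for every definable $Y\subseteq H$, define $\nu(Y)=\mu(\pi^{-1}(Y))$. Finite additivity is immediate from that of $\mu$ together with the fact that preimages preserve disjoint unions, and $\nu(H)=\mu(\pi^{-1}(H))=\mu(G)=1$, so $\nu$ is a Keisler measure on $H$. For left $H$-invariance, fix $h\in H$ and a definable $Y\subseteq H$; equivariance of $\pi$ gives $\pi^{-1}(hY)=h\cdot\pi^{-1}(Y)$, and left $G$-invariance of $\mu$ yields $\nu(hY)=\mu(h\cdot\pi^{-1}(Y))=\mu(\pi^{-1}(Y))=\nu(Y)$. Hence $\nu$ witnesses definable amenability of $H$.

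The only non-formal point is the first step: definable Skolem functions yield, for each $g$, a definable element of $Hg$, but one still has to arrange that this element depends only on the coset $Hg$, which is where the assumption is really being used (in the paper's $o$-minimal/RCF setting, canonical representatives for the definable equivalence relation ``same right coset of $H$'' are available, so one can first pass to a definable transversal and then apply Skolem). Once $r$ is in hand, the remainder of the argument is the routine pushforward calculation above.
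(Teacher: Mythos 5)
Your construction is correct and is essentially the paper's own proof: with $S=r(G)$ the definable transversal for the right cosets of $H$, your pushforward satisfies $\nu(Y)=\mu(\pi^{-1}(Y))=\mu(Y\cdot S)$, which is exactly the measure $\lambda(Y)=\mu(Y\cdot S)$ the paper defines, and the left-invariance computation is the same. The caveat you raise about making the representative depend only on the coset (definable choice, not just Skolem functions) is a fair point, and is resolved exactly as you indicate in the $o$-minimal/RCF setting the paper works in.
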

\begin{proof} Let $\mu$ be a left $G$-invariant Keisler measure on $G$.  By the existence of definable Skolem functions there is a definable subset $S$ of $G$ which meets each coset of $H$ in $G$ in exactly one point. Define $\lambda$ on definable subsets of $H$ by:  for $Y$ a definable subset of $H$,
$\lambda(Y) = \mu(Y\cdot S)$  where  $Y\cdot S = \{a\cdot b: a\in Y, b\in S\}$. 
\newline
It is easy to see that $\lambda$ is a Keisler measure on $H$. Left $H$-invariance, is because, for $Y\subseteq H$ definable and $h\in H$, $\lambda(h\cdot Y) = \mu((h\cdot Y)\cdot S) = \mu(h\cdot(Y\cdot S)) =$ (by left invariance of $\mu$) $\mu(Y\cdot S)  = \lambda(Y)$.

\end{proof}

\begin{Lemma} Suppose $G$ is definable and $H$ is a definable normal subgroup.
\newline
(i) If $G$ is definably amenable, so is $G/H$.
\newline
(ii) (Assume $T$ has $NIP$.) If both $H$ and $G/H$ are definably amenable, so is $G$.
\end{Lemma}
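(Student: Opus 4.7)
For (i) the plan is a direct pushforward. Since $H$ is normal and definable, $G/H$ is a definable group with a definable quotient map $\pi \colon G \to G/H$. Given a left $G$-invariant Keisler measure $\mu$ on $G$ over $M$, I would set $\bar\mu(Y) := \mu(\pi^{-1}(Y))$ for each $M$-definable $Y \subseteq G/H$. Finite additivity and total mass $1$ transfer directly from $\mu$, and the identity $\pi^{-1}(\bar g \cdot Y) = g \cdot \pi^{-1}(Y)$ (for any lift $g$ of $\bar g$) combined with the left $G$-invariance of $\mu$ yields left $G/H$-invariance of $\bar\mu$. No $NIP$ hypothesis is needed here.

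For (ii) the plan is a Fubini-type construction. Let $\nu$ be a left $H$-invariant Keisler measure on $H$ over $M$ and $\bar\mu$ a left $G/H$-invariant Keisler measure on $G/H$ over $M$; I view $\bar\mu$ as a regular Borel probability measure on the type space $S_{G/H}(M)$. For each $M$-definable $X \subseteq G$, set
\[
f_X(g) := \nu(H \cap g^{-1} X), \qquad g \in G.
\]
The identity $H \cap (gh)^{-1} X = h^{-1}(H \cap g^{-1} X)$ combined with the left $H$-invariance of $\nu$ shows $f_X(gh) = f_X(g)$ for $h \in H$, so $f_X$ descends to $\bar f_X \colon G/H \to [0,1]$. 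The candidate measure is then
\[
\mu(X) := \int_{S_{G/H}(M)} \bar f_X\, d\bar\mu.
\]

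The crucial ingredient is to make sense of this integral, i.e.\ to show that $\bar f_X$ extends to a Borel measurable function on $S_{G/H}(M)$. This is exactly where the $NIP$ hypothesis is used: by the Hrushovski--Pillay Borel-definability theorem for Keisler measures in $NIP$ theories (see \cite{NIPII}), for any formula $\psi(x,y)$ over $M$ the map $q \mapsto \nu(\psi(x,b))$ (with $b \models q$) is a Borel function on $S_y(M)$. Applied to $\psi(x,y) \equiv (x \in H) \wedge (y \cdot x \in X)$ with $y$ in the sort of $G$, this gives Borel-ness of $\tilde f_X$ on $S_G(M)$; the right $H$-invariance of $f_X$ (together with invariance of $\nu$ over $M$) then lets it descend via the quotient $\pi_* \colon S_G(M) \to S_{G/H}(M)$ to a Borel function on $S_{G/H}(M)$, as required.

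Once the integral is defined, the remaining verifications are routine. Finite additivity and $\mu(G) = \int \nu(H)\, d\bar\mu = 1$ come from linearity of the integral; for left $G$-invariance, given $h \in G(M)$, the identity $g^{-1}(hX) = (h^{-1}g)^{-1}X$ together with the change of variable $\bar g \mapsto \bar h \cdot \bar g$ on $S_{G/H}(M)$ and the left $G/H$-invariance of $\bar\mu$ yield $\mu(hX) = \mu(X)$. The main obstacle is the Borel-measurability step; the $NIP$ hypothesis enters precisely at that point, and explains why (i) holds in an arbitrary theory while (ii) needs $NIP$.
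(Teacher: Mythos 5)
Your part (i) is exactly the paper's argument (pushforward along $\pi$), and your overall architecture for part (ii) --- define $f_X(g)=\nu(H\cap g^{-1}X)$, note it descends to $G/H$, and integrate against the measure on $S_{G/H}$ --- is also the paper's. The issue is the step you yourself flag as crucial: the justification that the integrand is measurable. The Borel-definability theorem of Hrushovski--Pillay applies to a \emph{global} Keisler measure that is invariant under $Aut(\bar M/M_0)$ for some small model $M_0$; only for such a measure does $\nu(\psi(x,b))$ even depend solely on $tp(b/M_0)$, which is what you need to get a function on the type space in the first place. Your $\nu$ is merely a left $H(\bar M)$-translation-invariant measure (and in your setup not even a global one), and translation invariance does not give automorphism-invariance over a small model. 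So as written the appeal to Borel definability is not licensed, and for a nonrealized type $q$ the value $\bar f_X(q)$ is not yet well defined.

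The paper closes exactly this gap by a preliminary reduction: working with global measures in a saturated model, it invokes Lemma 5.8 of \cite{NIPII} (this is where $NIP$ enters) to replace the invariant measure on $H$ by one that is moreover \emph{definable} over a small model $A$. Then $f(g/H)=\mu((g^{-1}X)\cap H)$ depends only on $tp(g/H\,/A)$ and the induced map $S_{G/H}(A)\to[0,1]$ is \emph{continuous}, so the integral against $\lambda$ is unproblematic and no Borel-definability argument (or descent of Borel functions along $\pi_*\colon S_G\to S_{G/H}$, which raises its own measurability questions, since continuous images of Borel sets need only be analytic) is needed. If you insert that reduction --- ``by \cite{NIPII} we may take $\nu$ to be definable'' --- your argument becomes the paper's proof; without it, the measurability of $\bar f_X$ is unproven. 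The remaining verifications (finite additivity, total mass, and the change-of-variable computation for left invariance) are fine.
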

\begin{proof}
(i) Let $\pi:G\to G/H$ be the canonical surjective homomorphism. If $\mu$ is a left $G$-invariant Keisler measure on $G$, then the ``pushforward measure" on $G/H$ defined by $\lambda(Y) = \mu(\pi^{-1}(Y))$ is a left invariant Keisler measure on $G/H$.
\newline
(ii) We work in a saturated model ${\bar M}$. Let $\mu,\lambda$ be translation-invariant Keisler measures on $H$ and $G/H$ respectively over ${\bar M}$ (i.e. ``global" Keisler measures). By Lemma 5.8 of \cite{NIPII} we may assume that $\mu$ is {\em definable}. We define a global Keisler measure $\chi$ on $G$ by integration:
Namely, let $X$ a definable subset of $G$, and we may assume that both $X$ and $\mu$ are definable over a small model $A$. For $g/H \in G/H$, let $f(g/H) = \mu((g^{-1}X) \cap H)$, noting by translation invariance of $\mu$, that this is well-defined. By definability of $\mu$ over $M$, $f(g/H)$ depends on $tp((g/H)/M)$ and the corresponding map from the relevant space of complete types $S_{G/H}(M)$  to  $[0,1]$ is continuous. So considering $\lambda$ as inducing a Borel measure on $S_{G/H}(M)$ we can form $\int f d\lambda$, which we define to be $\chi(X)$. It is easily checked that $\chi$ is a global translation invariant Keisler measure on $G$. 
\end{proof}

\begin{Lemma} Suppose $G$ is a definably almost simple, non definably compact group, definable in an $o$-minimal expansion $M$ of a real closed field $K$ say. Then $G$ is not definably amenable.
\end{Lemma}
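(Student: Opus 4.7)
The plan is to reduce the statement to showing that $SL_{2}(K)$ itself is not definably amenable, and then to rule out a left-invariant Keisler measure on $SL_{2}(K)$ by a dynamical argument on the projective line $\Pp^{1}(K) = SL_{2}(K)/B$.

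For the reduction: since $G$ is definably almost simple, $Z(G)$ is finite and $G/Z(G)$ is definably simple, so by Lemma 4.4(i) we may replace $G$ by $G/Z(G)$ and assume $G$ is definably simple. By the Peterzil--Pillay--Starchenko classification of definably simple groups in $o$-minimal expansions of a real closed field (\cite{PPSI}, \cite{PPSII}), $G$ is then definably isomorphic to the definable connected component $H(K)^{0}$ of the $K$-points of an absolutely simple linear algebraic group $H$ defined over $K$. Since $G$ is not definably compact, $H$ has positive $K$-rank, so it contains a definable $K$-subgroup isomorphic to $SL_{2}(K)$ or $PGL_{2}(K)$, obtained as the root-$SL_{2}$ attached to any real root of a maximal $K$-split torus. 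As $o$-minimal expansions of $RCF$ admit definable Skolem functions, Lemma 4.3 passes definable amenability to this subgroup, and it suffices to show that $SL_{2}(K)$ is not definably amenable.

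To show the latter, I will suppose for contradiction that $\mu$ is a left $SL_{2}(K)$-invariant Keisler measure on $SL_{2}(K)$, let $B$ be the Borel subgroup of upper-triangular matrices, and push $\mu$ forward along the definable quotient map $\pi : SL_{2}(K) \to SL_{2}(K)/B \cong \Pp^{1}(K)$ (via the M\"obius action) to obtain a left $SL_{2}(K)$-invariant Keisler probability measure $\nu$ on $\Pp^{1}(K)$. The contradiction then comes in three steps. First, for any bounded open interval $(a,b) \subset K$ with $c = b - a$, the translates $(a + nc, b + nc)$ for $n \in \Z$, arising from the unipotent element $z \mapsto z + c$ of $SL_{2}(K)$, are pairwise disjoint and all have equal $\nu$-measure, so since any $N$ of them together sit inside $\Pp^{1}(K)$, finite additivity will force $\nu((a,b)) = 0$. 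Second, the involution $z \mapsto -1/z$ in $SL_{2}(K)$ sends $(a, \infty)$ (for $a > 0$) to the bounded interval $(-1/a, 0)$, and the translation $z \mapsto z + c$ sends $(0, \infty)$ to $(c, \infty)$, so invariance and step one give $\nu((a, \infty)) = \nu((0, \infty)) = 0$, and symmetrically $\nu((-\infty, 0)) = 0$. Third, since $SL_{2}(K)$ acts transitively on $\Pp^{1}(K)$, every singleton carries the same $\nu$-atomic mass, which is forced to be $0$ because $\Pp^{1}(K)$ is infinite and $\nu$ is a probability measure. Writing $\Pp^{1}(K) = (-\infty, 0) \sqcup \{0\} \sqcup (0, \infty) \sqcup \{\infty\}$ will then yield $\nu(\Pp^{1}(K)) = 0$, contradicting $\nu(\Pp^{1}(K)) = 1$.

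The main obstacle is the reduction step, which relies on the Peterzil--Pillay--Starchenko classification to put $G$ into algebraic form and on the structure theory of semisimple algebraic groups to exhibit a definable root-$SL_{2}$ subgroup. The measure-theoretic argument on $\Pp^{1}(K)$ is essentially the classical proof that $PSL_{2}(\R)$ admits no invariant probability measure on the projective line, but one has to remain in the finitely-additive setting throughout, using only finite families of pairwise disjoint translates in each application of invariance rather than any countable decomposition.
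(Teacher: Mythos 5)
Your proof is correct, and the reduction step is essentially the one the paper uses: via Lemma 4.4(i) and the Peterzil--Pillay--Starchenko classification one locates, up to definable isogeny, a copy of $SL_2(K)$ (or $PSL_2(K)$) inside $G$, using the fact that a non-definably-compact definably simple group corresponds to an isotropic (positive $K$-rank) simple algebraic group; definable Skolem functions and Lemma 4.3 then transfer definable amenability down to that subgroup. Where you diverge is at the final step: the paper simply cites Remark 5.2(iv) of \cite{NIPI} for the fact that $PSL_2(K)$ is not definably amenable, whereas you prove it from scratch by pushing a putative invariant Keisler measure forward to $\Pp^1(K) = SL_2(K)/B$ and running the classical paradoxical argument (infinitely many disjoint unipotent translates of a bounded interval, the inversion $z\mapsto -1/z$ to handle the unbounded pieces, and transitivity to kill atoms). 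Your version is careful to use only finitely many disjoint translates at a time, so it is genuinely valid for finitely additive Keisler measures, and all the sets involved (intervals with parameters in $K$, singletons) are definable; this makes the argument self-contained and arguably more transparent than the citation. The one point worth tightening is the claim that non definable compactness of $H(K)^0$ forces positive $K$-rank of $H$: this is the transfer to real closed fields of the statement that a connected semisimple real group is compact iff it is anisotropic, and it is exactly the ingredient the paper attributes to ``the classification of the real simple Lie algebras corresponding to simple noncompact Lie groups,'' so it deserves an explicit reference (e.g.\ to \cite{PPSII}) rather than being asserted in passing.
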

\begin{proof} The main point is to observe that, working up to definable isogeny, $G$ contains a definable subgroup definably isomorphic to $PSL(2,K)$. 
Granting this observation, the lemma follows from Lemma 4.4 together with Remark 5.2(iv) of \cite{NIPI} (which states that $PSL(2,K)$ is not definably amenable). The observation itself follows from results in \cite{PPSI} and \cite{PPSIII}, together with the classification of the real simple Lie algebras corresponding to simple noncompact Lie groups.
\end{proof}

We can now conclude, where notation comes from the paragraph following the proof of Proposition 2.6.
\begin{Proposition} Let $G$ be a definable, definably connected, group in an $o$-minimal expansion $M$ of a real closed field. Then $G$ is definably amenable if and only if $D$ (the semisimple with no definably compact parts, part of $G$)  is trivial. 
\end{Proposition}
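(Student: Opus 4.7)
The plan is to use the decomposition data of Proposition 2.6 (namely $W \trianglelefteq G$ maximal torsion-free, $C \trianglelefteq G/W$ maximal definable, definably compact, definably connected, normal, and $D = (G/W)/C$ semisimple with no definably compact parts) together with Lemmas 4.4 and 4.6 to reduce both directions of the equivalence to already-established facts.

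For the direction ``$D$ trivial $\Rightarrow$ $G$ definably amenable'', if $D$ is trivial then $G/W = C$ is definably compact, and definably compact groups in $o$-minimal structures are definably amenable (via fsg and the lift of the Haar measure on $G/G^{00}$, as in \cite{NIPI}). The torsion-free normal subgroup $W$ is solvable by Proposition 2.1, with a composition series whose successive quotients are one-dimensional torsion-free groups; these are abelian (one-dimensional $o$-minimal groups are abelian) and carry a translation-invariant global Keisler measure, hence are definably amenable. Since $o$-minimal theories have $NIP$, iterating Lemma 4.4(ii) up this series shows $W$ is definably amenable, and a final application of Lemma 4.4(ii) to $1 \to W \to G \to G/W \to 1$ yields that $G$ is definably amenable. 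For the converse, I would argue by contradiction: suppose $G$ is definably amenable but $D$ is nontrivial. By Lemma 4.4(i), quotients inherit definable amenability, so $G/W$ and hence $(G/W)/C = D$ are definably amenable. Apply Lemma 2.4 to $D$: there are definable, definably almost simple subgroups $H_1, \ldots, H_t$ of $D$ (each normal in $D$, as each $H_i$ is the definably connected component of the preimage of a direct factor of $D/Z(D)$) whose almost direct product is $D$, and by Definition 2.5 none of them is definably compact. Fix $i$ and set $K_i = \prod_{j \neq i} H_j$, a normal definable subgroup of $D$; then $D/K_i$ is definably isomorphic to $H_i/(H_i \cap K_i)$ with $H_i \cap K_i$ finite and central. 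Hence $D/K_i$ remains definably almost simple and not definably compact. Lemma 4.4(i) gives that $D/K_i$ is definably amenable, contradicting Lemma 4.6.

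The main obstacle I anticipate is inside the easy direction: the verification that $W$ is definably amenable. Each ingredient is standard (the composition series from Proposition 2.1, abelianness of one-dimensional $o$-minimal groups, the existence of an invariant Keisler measure on each such one-dimensional abelian piece, and the NIP extension lemma), but they need to be strung together carefully. Once this is in hand, the rest of the proof is essentially bookkeeping with the decomposition of Proposition 2.6, and the result follows by inspection.
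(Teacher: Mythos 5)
Your proof is correct and follows the same overall strategy as the paper (reduce both directions to the decomposition of Proposition 2.6, the NIP extension/quotient lemma, and the non-amenability of definably almost simple non-definably-compact groups), but it differs in two sub-steps, both legitimately. For the amenability of $W$ the paper takes a shortcut you don't: since $W$ is solvable it is amenable \emph{as an abstract group}, hence trivially definably amenable, so no induction up the composition series and no invariant type at $+\infty$ on the one-dimensional quotients is needed; your iterated application of Lemma 4.4(ii) works but is more labour than necessary. For the converse the paper locates a definable, definably almost simple, non definably compact \emph{subgroup} of $D$ and passes definable amenability down to it via the Skolem-functions lemma (Lemma 4.3), then contradicts Lemma 4.5 (what you call ``Lemma 4.6'' is Lemma 4.5 in the paper); you instead pass to the \emph{quotient} $D/K_i$ by the product of the remaining almost-simple factors and use only Lemma 4.4(i). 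Your quotient route is arguably cleaner, as it avoids any appeal to definable Skolem functions, and your verification that $D/K_i$ is still definably almost simple and non definably compact (via finiteness of $H_i\cap K_i$ in the almost direct product) is sound.
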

\begin{proof} First suppose that  $D$ is trivial, so we have a short exact sequence $$1\to W \to G \to C \to 1$$ where $W$ is solvable and $C$ is definably compact. Now $W$ is amenable as an abstract group, so in particular definably amenable, and by \cite{NIPII}, $C$ is definable amenable. As $Th(M)$ has $NIP$, by Lemma 4.4(ii) $G$ is definably amenable.

Conversely, if  $G$ is definably amenable, then by Lemma 4.4(i), $D$ is too, as it is a quotient of $G$. If $D$ is nontrivial then it contains a definably almost simple (non definably compact) definable subgroup, which by Lemma 4.3 is definably amenable. This contradicts  Lemma 4.5.
\end{proof}

We give a little more information around definable amenability by noting:
\begin{Proposition} ($T$ an $o$-minimal expansion of $RCF$.) Suppose $G$ is definable, definably connected, and torsion-free. Then $G$ has a (left) invariant, definable, global complete type. 
\end{Proposition}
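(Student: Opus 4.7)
The plan is to induct on $\dim G$ using the decomposition from Proposition 2.1(i): there are definable subgroups $\{1\} = G_0 \triangleleft G_1 \triangleleft \cdots \triangleleft G_n = G$ with each $G_{i+1}/G_i$ one-dimensional, definably connected, and torsion-free. Setting $W := G_{n-1}$, we obtain $W \triangleleft G$ with $W$ torsion-free (an extension of a torsion-free group by a torsion-free group is torsion-free, via the Euler characteristic multiplicativity used in the proof of 2.1) and $G/W$ one-dimensional and torsion-free. Both have strictly smaller dimension than $G$, so the inductive hypothesis will supply a left-invariant definable global type on each.

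For the base case $\dim G = 1$, a one-dimensional definably connected torsion-free definable group in an $o$-minimal expansion of $RCF$ is abelian and admits a translation-invariant definable linear order (standard: by the classification of one-dimensional definable groups it is definably isomorphic to the additive structure of an interval of the underlying field $K$). Let $p_\infty$ be the global complete type over $\bar M$ generated by the partial type $\{x > c : c \in \bar M\}$ together with the formula defining $G$; it is complete by $o$-minimality, $\emptyset$-definable, and invariant under left translation by any $g \in G(\bar M)$, since translating an arbitrarily large element by a bounded one keeps it arbitrarily large.

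For the inductive step, let $p_W \in S_W(\bar M)$ and $p_{G/W} \in S_{G/W}(\bar M)$ be the given left-invariant definable global types. Since $T$ has definable Skolem functions, fix a definable section $s : G/W \to G$ of the quotient $\pi : G \to G/W$. Define $p \in S_G(\bar M)$ by the Morley-product recipe: in a larger saturated extension $\bar M^+ \succeq \bar M$, let $\bar a$ realize the canonical definable extension of $p_{G/W}$ to $\bar M^+$, then let $b$ realize the canonical definable extension of $p_W$ to $\bar M^+ \cup \{s(\bar a)\}$, and set $p := tp(s(\bar a) \cdot b / \bar M)$. Definability of $p$ follows from that of $p_W$, $p_{G/W}$, and $s$ by the standard calculation with defining schemas. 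For left $G(\bar M)$-invariance, take $g \in G(\bar M)$ and write $g \cdot s(\bar a) = s(g \bar a) \cdot w$ with $w := s(g\bar a)^{-1} g s(\bar a) \in W(\bar M^+)$. Then $g \bar a$ realizes the same extension of $p_{G/W}$ as $\bar a$ (by $G/W(\bar M)$-invariance of $p_{G/W}$, applied to the image $\bar g$ of $g$), and $w \cdot b$ realizes the canonical extension of $p_W$ to $\bar M^+ \cup \{s(g\bar a)\}$ by the transferred $W$-invariance discussed below. Hence $g \cdot s(\bar a) \cdot b = s(g\bar a) \cdot (wb)$ has type $p$ over $\bar M$, as required.

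The principal obstacle is the transfer of $W(\bar M)$-invariance of the definable type $p_W$ to $W(\bar M^+)$-invariance of its canonical extension, which is what allows $w \in W(\bar M^+)$ to act correctly on $b$. This is a first-order matter: for each formula $\phi(x;y)$, the condition ``$d_\phi(w', c) \leftrightarrow d_\phi(e, c)$ for all $w' \in W$ and all $c$'' (with $d_\phi$ the defining formula of $p_W$ corresponding to $\phi$) is a scheme of first-order sentences holding in $\bar M$, hence by elementarity it also holds in $\bar M^+$. With this transfer in hand, the inductive step closes and the proposition follows. Routine collateral points --- torsion-freeness of $W$, the ordering in the base case, and preservation of definability under tensor-product and reparametrization --- are either standard $o$-minimal facts or mentioned in Section 2.
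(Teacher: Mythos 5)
Your proof is correct and follows essentially the same route as the paper's: induction on dimension via Proposition 2.1(i), the definable invariant type at $+\infty$ on the one-dimensional torsion-free quotient (via Razenj), and the combination of the invariant definable types on $W$ and $G/W$ by the definable-type product, which is exactly the type-analogue of the integration argument of Lemma 4.4(ii) that the paper invokes. (One minor quibble: a one-dimensional torsion-free group need not be definably isomorphic to an additive interval of $K$ in a reduct of the field language, but Razenj's identification of it with an interval carrying a continuous group operation is all your base case actually uses, so nothing is affected.)
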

\begin{proof} We again argue by induction on $dim(G)$. By Proposition 1.1 (i), $G$ contains a normal definable subgroup $H$ such that $G/H$ is $1$-dimensional. From results in \cite{Razenj} we may assume that $G/H$ is an open interval in $1$-space with continuous group operation. The global type at ``$+\infty$", $p$ say, is both definable and translation invariant. On the other hand the induction hypothesis gives a definable  translation invariant global complete type $q$ of $H$. The argument (by integration) in the proof of Lemma 4.4(ii) produces a global complete type of $G$ which is both translation invariant and definable.
\end{proof}

\vspace{2mm}
\noindent
We now focus on Conjecture 4.2. 
From now on ${\bar M}$ denotes a saturated model of (arbitrary complete countable) $T$, of cardinality $\bar\kappa$ where $\bar\kappa$ is inaccessible, and $G$ an $\emptyset$-definable group. 
Let us first remark that the converse to Conjecture 4.2 holds for $NIP$ theories.

\begin{Remark} (Assume $T$ has $NIP$.) Suppose $G$ is definably amenable. Then $G$ has a bounded orbit.
\end{Remark}
\begin{proof} By Proposition 5.12 of \cite{NIPII}, $G$ has a global $f$-generic type $p$. Fix a small model $M_{0}$ which witnesses this. There will then be a bounded number of global complete types which do not fork over $M_{0}$, as there are a bounded number of complete types over $M_{0}$, and by $NIP$ any complete type over $M_{0}$ has a bounded number of global nonforking extensions (Proposition 2.1 of \cite{NIPII}). As every $G({\bar M})$-translate of $p$ does not fork over $M_{0}$ there are a bounded number of such translates so $p$ has bounded orbit.
\end{proof}

\begin{Lemma} Suppose $G = G({\bar M})$ is almost simple as an abstract group, in the sense that $G$ has no infinite proper normal subgroups.  Then $G$
has no proper subgroup of index $<\bar\kappa$. In particular any bounded orbit of $G$ is a singleton (namely a translation invariant type).
\end{Lemma}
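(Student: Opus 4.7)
The plan is to show the contrapositive: if $H \leq G$ is a proper subgroup, then $[G:H] \geq \bar\kappa$. The ``bounded orbit is a singleton'' statement follows at once from Definition~4.1, applied with $H = \mathrm{Stab}(p)$: if $\mathrm{Stab}(p) = G$ then $gp = p$ for every $g \in G$, so the orbit $\{gp : g \in G\}$ collapses to $\{p\}$.

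Given $H \leq G$ of index $\lambda := [G:H] < \bar\kappa$, my first step is to pass to its normal core $N := \bigcap_{g \in G} g H g^{-1}$. Because $N$ is precisely the kernel of the natural left action of $G$ on the coset space $G/H$, the quotient $G/N$ embeds into $\mathrm{Sym}(G/H)$. This yields the crude bound $[G:N] \leq \lambda^\lambda$ (understood as $\lambda!$ when $\lambda$ is finite). The inaccessibility hypothesis on $\bar\kappa$ is exactly what is needed to guarantee $\lambda^\lambda < \bar\kappa$ whenever $\lambda < \bar\kappa$, so we retain $[G:N] < \bar\kappa$.

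Now the almost-simplicity hypothesis kicks in: since $N \trianglelefteq G$, either $N$ is finite or $N = G$. But $\bar M$ is saturated of cardinality $\bar\kappa$, so the infinite $\emptyset$-definable set $G$ has cardinality $\bar\kappa$; if $N$ were finite we would have $[G:N] = \bar\kappa$, contradicting the bound just established. Hence $N = G$, and since $N \leq H$ this forces $H = G$, completing the proof.

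I do not foresee a genuine obstacle. The only delicate ingredient is the cardinal arithmetic $\lambda < \bar\kappa \Rightarrow \lambda^\lambda < \bar\kappa$, which is precisely what inaccessibility of $\bar\kappa$ buys us; beyond that, only the fact $|G| = \bar\kappa$ (from saturation) is used, with no further model-theoretic machinery required.
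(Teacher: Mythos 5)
Your proof is correct and is essentially the paper's own argument: the paper likewise considers the kernel $N$ of the action of $G$ on $G/H$ (your normal core), bounds $|G/N|$ by $|{\rm Sym}(G/H)| < \bar\kappa$ using inaccessibility, and derives a contradiction with almost simplicity since $|G| = \bar\kappa$ forces any normal subgroup of index $<\bar\kappa$ to be infinite. The handling of the ``in particular'' clause via $Stab(p)$ also matches the paper.
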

\begin{proof} Suppose $H$ were a proper subgroup of $G$ of bounded index. Then $G$ acts transitively on the homogeneous space $X = G/H$. Let 
$N = \{g\in G:gx = x$ for all $x\in X\}$. Then $N$ is a proper normal subgroup of $G$. As $G/N$ acts faithfully on $X$ and $|X|< \bar\kappa$, also 
$|G/N| < \bar\kappa$, in particular $N$ is an infinite proper normal subgroup of $G$, contradiction.
\newline
For the ``in particular" clause: if $p\in S_{G}({\bar M})$ has bounded orbit, then $Stab(p)$ is a subgroup of $G$ of bounded index. By what has just been shown $Stab(p) = G$ so $p$ is left $G$-invariant.
\end{proof}

\begin{Lemma} Let $f:G\to H$ be a definable surjective homomorphism. If  $G$ has a bounded orbit, so does $H$.
\end{Lemma}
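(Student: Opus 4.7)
The plan is to lift the bounded orbit from $G$ to $H$ by pushing forward the type through $f$. Fix $p \in S_{G}(\bar M)$ with bounded orbit, so $\mathrm{Stab}(p) = \{g \in G(\bar M) : gp = p\}$ has index $< \bar\kappa$ in $G(\bar M)$. Since $f$ is $\bar M$-definable (it is definable, and we may name its parameters), I can define the pushforward $q := f_{*}(p) \in S_{H}(\bar M)$ as the type of $f(a)$ over $\bar M$, where $a$ realises $p$ in some bigger model; this is well-defined because $f$ is $\bar M$-definable, so $\mathrm{tp}(f(a)/\bar M)$ depends only on $\mathrm{tp}(a/\bar M) = p$.

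Next I want to show that $f(\mathrm{Stab}(p)) \subseteq \mathrm{Stab}(q)$. Take $g \in \mathrm{Stab}(p)$ and let $h = f(g)$. If $a \models p$ in a bigger model, then $ga \models gp = p$, so $f(ga) \models q$; but $f(ga) = f(g)f(a) = h \cdot f(a)$, and $f(a) \models q$, so $h q = q$, i.e.\ $h \in \mathrm{Stab}(q)$. This is the key compatibility step.

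Now I use surjectivity of $f$ to bound the index. The map $G(\bar M)/\mathrm{Stab}(p) \to H(\bar M)/f(\mathrm{Stab}(p))$ sending $g\mathrm{Stab}(p) \mapsto f(g)\,f(\mathrm{Stab}(p))$ is well-defined and, since $f$ is surjective, also surjective. Hence
\[
[H(\bar M) : f(\mathrm{Stab}(p))] \;\leq\; [G(\bar M) : \mathrm{Stab}(p)] \;<\; \bar\kappa.
\]
Combining with the inclusion $f(\mathrm{Stab}(p)) \subseteq \mathrm{Stab}(q)$, the stabilizer of $q$ in $H(\bar M)$ has index $< \bar\kappa$, so $q$ is a global type of $H$ with bounded orbit, and $H$ has a bounded orbit.

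I don't anticipate any real obstacle here: the argument is essentially functoriality of the stabilizer under definable surjections. The only thing to be careful about is that the pushforward $f_{*}(p)$ really is a complete type over $\bar M$ (and not just over the parameter set of $f$), but this is immediate because $f$ is definable over (a small subset of) $\bar M$, and complete types over $\bar M$ push forward to complete types over $\bar M$.
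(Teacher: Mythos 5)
Your proof is correct and follows essentially the same route as the paper's: push $p$ forward to $q = f(p)$, check $f(\mathrm{Stab}_G(p)) \subseteq \mathrm{Stab}_H(q)$ by the computation $q = f(gp) = f(g)q$, and use surjectivity of $f$ to transfer the bound on the index. You merely make explicit the index-counting step that the paper leaves implicit.
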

\begin{proof} Let $p\in S_{G}({\bar M})$ have bounded orbit. Then $q = f(p)\in S_{H}({\bar M})$, and if $g\in Stab_{G}(p)$ then 
$q = f(p) = f(gp) = f(g)q$ hence $f(Stab_{G}(p)) \subseteq Stab_{H}(q)$. As $Stab_{G}(p)$ has bounded index in $G$, also $Stab_{H}(q)$ has bounded index in $H$. 
\end{proof}

\begin{Proposition} Assume $T$ is an $o$-minimal expansion of $RCF$ and $G$ is definably connected. Suppose $G$ has a bounded orbit. Then $D$ (the semisimple with no definably compact parts, part of $G$) from Proposition 2.6 is trivial.
\end{Proposition}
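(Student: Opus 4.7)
The plan is to prove the contrapositive: assuming $D\neq 1$, I exhibit a definable surjective homomorphism from $G$ onto an abstractly simple, non definably compact group $S$, and then combine Lemmas 4.8 and 4.5 to rule out a bounded orbit of $G$. First, by Lemma 2.4 applied to $D$, the latter is an almost direct product of definably almost simple, non definably compact definable subgroups $H_1,\ldots,H_t$ with $t\ge 1$. Let $N$ be the image in $D$ of $H_2\times\cdots\times H_t$, a definable normal subgroup of $D$; chasing the almost direct product presentation shows $D/N \cong H_1/F_1$ for some finite central $F_1\le H_1$, so $D/N$ is itself definably almost simple and non definably compact. Set $S = (D/N)/Z(D/N)$, a definably simple non definably compact group. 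Composing canonical quotient maps yields a definable surjective homomorphism
\[ f\colon G \to G/W \to (G/W)/C = D \to D/N \to S. \]

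Now assume for contradiction that $G$ has a bounded orbit. Applying Lemma 4.9 to the factors of $f$, $S$ has a bounded orbit as well. The crucial input is that $S$ is abstractly simple: by the Peterzil--Pillay--Starchenko structure theory for definably simple, non definably compact groups in $o$-minimal expansions of $RCF$ (\cite{PPSI}, \cite{PPSIII}; cf.\ the argument in the proof of Lemma 4.5), $S$ is definably isomorphic to the group of $K$-points of an adjoint simple algebraic group over $K$, and such groups of points are simple as abstract groups. Hence Lemma 4.8 applies and gives a global type $p\in S_S(\bar M)$ which is left $S(\bar M)$-invariant; the Dirac measure at $p$ is then a left invariant Keisler measure on $S$, so $S$ is definably amenable. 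This contradicts Lemma 4.5, completing the proof.

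The main obstacle is pinning down a convenient definable quotient of $G$ that is abstractly simple; once $S$ is in hand, abstract simplicity is precisely the ingredient that allows the purely model-theoretic Lemma 4.8 to upgrade a bounded orbit to an invariant global type, and the rest is a direct combination of Lemmas 4.5 and 4.9 with the decomposition from Proposition 2.6.
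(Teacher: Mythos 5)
Your argument is correct and follows essentially the same route as the paper: reduce via the almost direct product decomposition of $D$ to a single definably almost simple, non definably compact quotient, push the bounded orbit forward along the definable surjection (the paper's Lemma 4.10), use abstract (almost) simplicity to upgrade the bounded orbit to an invariant global type (the paper's Lemma 4.9), and contradict non definable amenability via Lemma 4.5; your extra step of passing to the centreless quotient $S$ is harmless, since the paper instead cites Corollary 6.3 of \cite{PPSIII} for the factor $D_{0}$ being almost simple as an abstract group, which makes Lemma 4.9 apply directly. The only thing to fix is that your lemma references are shifted by one: what you call Lemma 4.8 is the paper's Lemma 4.9, and what you call Lemma 4.9 is the paper's Lemma 4.10.
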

\begin{proof} Suppose for a contradiction that $D$ is nontrivial. Then $D$ is an almost direct product of definable, definably almost simple non definably compact groups $D_{i}$. But then for $i=0$ say there is  a definable surjective homomorphism $f$ from $G$ to $D_{0}$. By Lemma 4.10, $D_{0}$ has a bounded orbit. As remarked earlier (Corollary 6.3 of \cite{PPSIII}) $D_{0}$ is almost simple as an abstract group, so by Lemma 4.9, $D_{0}$ has an invariant (global) type. This contradicts non definable amenability of $D_{0}$ (Lemma 4.5).
\end{proof}

\begin{Corollary} ($T$ an $o$-minimal expansion of $RCF$). $G$ has a bounded orbit if and only if $G$ is definably amenable.
\end{Corollary}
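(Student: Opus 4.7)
The plan is to simply assemble the corollary from the three main results of the section: Proposition 4.6 (definable amenability $\Leftrightarrow$ triviality of $D$), Remark 4.7 (amenability $\Rightarrow$ bounded orbit, assuming $NIP$), and Proposition 4.10 (bounded orbit $\Rightarrow$ triviality of $D$). Since $o$-minimal expansions of $RCF$ have $NIP$, Remark 4.7 applies unconditionally in our setting. So no new model-theoretic input is needed, and there is no serious obstacle to overcome — it is a matter of invoking the right lemmas in sequence.

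First I would dispose of the definable-connectedness hypothesis. Both Proposition 4.6 and Proposition 4.10 are stated for definably connected $G$, whereas Corollary 4.11 is stated without that assumption. Since $G^{0}$ has finite index in $G$, definable amenability of $G$ is equivalent to definable amenability of $G^{0}$ (averaging over cosets in one direction; restricting to $G^{0}$ and using that a definable transversal exists since $T$ has definable Skolem functions, in the other — in the spirit of Lemma 4.3). Similarly, a bounded orbit of $G$ restricts to (a finite union of) bounded orbits of $G^{0}$, and conversely a bounded orbit of $G^{0}$ extends to a bounded orbit of $G$. Hence we may, and do, assume $G$ is definably connected.

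Now for the forward direction: assume $G$ has a bounded orbit. By Proposition 4.10 the quotient $D = (G/W)/C$ is trivial, so $G/W$ is definably compact. Proposition 4.6 (or rather its proof) then gives that $G$ is definably amenable: $W$ is solvable (hence amenable as an abstract group, so definably amenable), $G/W = C$ is definably compact and so definably amenable by \cite{NIPII}, and Lemma 4.4(ii) (which requires $NIP$, and is available here) combines these to yield definable amenability of $G$.

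For the converse, assume $G$ is definably amenable. Since $o$-minimal theories have $NIP$, Remark 4.7 applies and gives that $G$ has a bounded orbit directly, completing the proof. Alternatively one could appeal to Proposition 4.6 to get $D$ trivial and then build a bounded (in fact invariant) type on $G$ out of invariant types on $W$ (Proposition 4.7) and on $C$ (via its compact quotient $G/G^{00}$), integrating as in the proof of Lemma 4.4(ii); but Remark 4.7 short-circuits this.
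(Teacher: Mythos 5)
Your proof is correct and follows essentially the same route as the paper: bounded orbit $\Rightarrow$ $D$ trivial (Proposition 4.11, which you cite as 4.10) $\Rightarrow$ definably amenable (Proposition 4.6), with the converse given by Remark 4.8 (which you cite as 4.7) since $o$-minimal theories have $NIP$. Your explicit reduction to the definably connected case is a sound extra precaution that the paper leaves implicit; just fix the off-by-one reference labels.
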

\begin{proof} If $G$ has a bounded orbit, then by Proposition 4.11 and Proposition 4.6, $G$ is definably amenable. The converse is Remark 4.8.
\end{proof}

\vspace{5mm}
\noindent
Finally we discuss a strengthening of Conjecture 4.2 in which we try to describe bounded orbits themselves. As we are not completely sure which way it will go we state the new conjecture as a question (with notation as above).
\begin{Problem} (Assume $T$ has $NIP$.)  Is it the case that $p\in S_{G}({\bar M})$ has bounded orbit (equivalently stabilizer of bounded index) if and only if  $p$ is $f$-generic?
\end{Problem}

Again the right to left direction holds with proof contained in the proof of Remark 4.8. 
In the $o$-minimal case we hope to give an explicit description of global types with bounded orbit from which a positive answer to Problem 4.13 can be just read off. By Corollary 4.12 and Proposition 4.6  we may restrict ourselves to definable groups $G$ for which $D$ (from the discussion after Proposition 2.6) is trivial, hence $G$ is built up from a definably compact group, and $1$-dimensional torsion-free groups. Here we just point out that Problem 4.13 has a positive answer for these constituents, and leave the general ($o$-minimal case) to later work. For the next lemma we recall that a
definable subset of $G$ (or the formula defining it) is said to be  left generic if finitely many left translates of $X$ cover $G$. Likewise for right generic. Definably compact groups $G$ in $o$-minimal expansions of real closed fields have the so-called ``finitely satisfiable generics" property (see \cite{NIPI}) which says that there is a global type of $G$ every left translate of which is finitely satisfied in some given small model. The $fsg$ property implies among other things that left genericity coincides with right genericity for definable subsets of $G$, so we just say {\em generic}. A generic type $p\in S_{G}(M)$ is one all of whose formulas are generic, and again such global types exist when $G$ is definably compact in $o$-minimal $T$.

\begin{Lemma} ($T$ $o$-minimal.) Suppose $G$ is definably compact, and $p(x)\in S_{G}({\bar M})$. Then the following are equivalent:
\newline
(i) $p$ has bounded $G$-orbit,
\newline
(ii) $p$ is generic,
\newline
(iii)  $p$ is $f$-generic.
\end{Lemma}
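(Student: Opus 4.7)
The plan is to use the fact that definably compact groups $G$ in $o$-minimal expansions of real closed fields have the $fsg$ (finitely satisfiable generics) property, established in \cite{NIPI}. This gives a small model $M_{0}$ and a global type $p_{0}$ whose every left translate $gp_{0}$ is finitely satisfiable in $M_{0}$. Standard consequences of $fsg$ in $NIP$ (from \cite{NIPI} and \cite{NIPII}) that I would invoke are: left-genericity of a formula equals right-genericity, so the ideal of non-generic formulas is translation-invariant; every global generic type is finitely satisfiable in $M_{0}$; and $Stab(p) = G^{00}$ for every generic $p$. In addition, $G^{00} = G^{000}$ for definably compact $G$, as mentioned just before the statement of the lemma.

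With these facts in hand, most of the implications become short. The direction (iii) $\Rightarrow$ (i) is precisely Remark 4.8. For (ii) $\Rightarrow$ (i), if $p$ is generic then $Stab(p) = G^{00}$ has bounded index, so the orbit is bounded. For (ii) $\Rightarrow$ (iii), if $p$ is generic then each translate $gp$ is again generic by translation-invariance of the generic ideal, hence finitely satisfiable in $M_{0}$, hence non-forking over $M_{0}$, so $p$ is $f$-generic.

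The substantive direction is (i) $\Rightarrow$ (ii). First I would show $Stab(p) \supseteq G^{00}$: since $G^{00} = G^{000}$ is the smallest bounded-index $\emptyset$-invariant subgroup, it suffices to check $Stab(p) \supseteq G^{000}$; the bounded-orbit hypothesis, combined with a careful choice of a small parameter set $A$ over which $p$ can be made Lascar-invariant, gives $Stab(p)$ enough $A$-invariance to force this containment. Second, from the resulting $G^{00}$-invariance of $p$, the orbit map $gG^{00} \mapsto gp$ is well-defined and continuous from the compact Hausdorff group $G/G^{00}$ (with the logic topology) into $S_{G}(\bar M)$, so pushing forward the Haar measure on $G/G^{00}$ yields a left-$G$-invariant Keisler measure on $S_{G}(\bar M)$ supported on the orbit of $p$. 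By uniqueness of the invariant measure in $fsg$ groups, this pushforward must coincide with the canonical $fsg$ measure $\mu_{G}$, whose support is exactly the set of generic types, so $p$ is generic.

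I expect the first step of the main direction to be the principal obstacle: $Stab(p)$ is not a priori type-definable or even $\emptyset$-invariant, so one has to argue carefully that bounded orbit provides enough Lascar-invariance to squeeze $G^{000}$ inside $Stab(p)$. Once that is in hand, the pushforward construction and the invocation of uniqueness of the $fsg$ measure are routine.
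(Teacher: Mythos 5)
Your treatment of (iii)$\Rightarrow$(i) and (ii)$\Rightarrow$(iii) matches the paper's (Remark 4.8, and finite satisfiability of generic formulas in a small model, respectively), and your (ii)$\Rightarrow$(i) is anyway subsumed by composing those two. The problem is the main direction (i)$\Rightarrow$(ii), where your route is genuinely different from the paper's and has a gap at exactly the point you flag. The claim that a bounded orbit forces $Stab(p)\supseteq G^{000}$ does not follow from $G^{000}$ being the smallest bounded-index \emph{invariant} subgroup: $Stab(p)$ has bounded index, but it is not a priori invariant over any small parameter set, and a bounded-index subgroup which fails to be invariant need not contain $G^{000}$ (already $(K,+)$ in a saturated real closed field has many bounded-index $\Q$-subspaces containing no invariant bounded-index subgroup). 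Saying that $p$ ``can be made Lascar-invariant over a careful choice of small $A$'' begs the question: showing that a bounded-orbit type is invariant over (equivalently, does not fork over) some small model is essentially the content of (i)$\Rightarrow$(iii), i.e.\ of the implication being proved; note that the paper poses the general form of this equivalence as an open problem (Problem 4.13), so the containment $G^{000}\leq Stab(p)$ should not be expected to come for free. (Your second step --- pushing forward Haar measure along the orbit map and invoking uniqueness of the invariant measure on an $fsg$ group --- also requires measurability of the sets $\{gG^{00}: Y\in gp\}$ in $G/G^{00}$, which is known for generic $p$ but not obvious for an arbitrary $G^{00}$-invariant type; but this is secondary.)

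The paper avoids $Stab(p)$ entirely and proves the contrapositive of (i)$\Rightarrow$(ii) by a forking argument: if $X\in p$ is non-generic, one may assume $X$ is closed (its frontier has smaller dimension, hence is non-generic, and the non-generics form an ideal); by compactness some translate $g\cdot X$ misses $G(M_{0})$ for a small model $M_{0}$ over which $X$ is defined, hence forks over $M_{0}$ by the results of Dolich and Peterzil--Pillay on closed sets in definably compact groups, hence divides over $M_{0}$ by Chernikov--Kaplan; stretching the resulting $k$-inconsistent $M_{0}$-indiscernible family to $(g_{i}\cdot X: i<\bar\kappa)$ then yields $\bar\kappa$ many distinct translates $g_{i}p$, so the orbit is unbounded. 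This is where $o$-minimality and definable compactness actually enter, and it is the step you would need to supply or replace.
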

\begin{proof}
In fact the implications $(ii) \to  (iii) \to (i)$ hold for  $fsg$ groups in arbitrary  $NIP$ theories and the proof will be at this level of generality. 
\newline
(iii) implies (i) is given by the proof of Remark 4.8.
\newline
(ii) implies (iii): By \cite{NIPI} (see also Fact 5.2 of \cite{NIPII}), any generic formula $\phi(x)$ over ${\bar M}$ is satisfied in any small model $M_{0}$ (over which $G$ is defined). So if $p\in S_{G}({\bar M})$ is generic, then every left translate of $p$ is finitely satisfied in $M_{0}$  (where $M_{0}$ is any small model over which $G$ is defined), so in particular every left translate of $p$ does not fork over $M_{0}$, hence $p$ is left generic.

\vspace{2mm}
\noindent
(i) implies (ii):  Here we give the proof assuming $o$-minimality of $T$ and definable compactness of $G$. Suppose $p$ is not generic. Let $X$ be a definable set (or formula) in $p$ which is not generic. Note that we may assume $G$ to be a closed bounded definable subset of some ${\bar M}^{n}$. The closure of $X$ in $G$ equals $X\cup Y$ where $dim(Y) < dim(G)$. So $Y$ is not generic in $G$. Hence as the set of non generic definable sets is an ideal, the closure of $X$ is also non generic (and of course in $p$). The upshot is that we may assume $X$ to be closed. Let $M_{0}$ be a small model over which $G$ and $X$ are defined. If for every $g\in G$, the left translate $g\cdot X$ meets $G(M_{0})$, then by compactness $X$ is right generic, so generic, a contradiction. Hence for some $g\in G$, $(g\cdot X) \cap G(M_{0}) = \emptyset$. Now $g\cdot X$ is also closed in $G$. So by results in \cite{Dolich} and \cite{Peterzil-Pillay} (see also \cite{Starchenko}), 
$g\cdot X$ forks over $M_{0}$. By the main result of \cite{Chernikov-Kaplan}  (which is maybe implicit in other papers 
in the $o$-minimal case), $g\cdot X$ divides over $M_{0}$. As $X$ is defined over $M_{0}$ this means that for some $M_{0}$-indiscernible sequence $(g_{i}:i<\omega)$ and 
some $k< \omega$, $\{g_{i}\cdot X: i< \omega\}$  is $k$-inconsistent, in the sense that for every (some)  $i_{1} < .. 
< i_{k}$, $(g_{i_{1}}\cdot X) \cap .... \cap (g_{i_{k}}\cdot X) = \emptyset$.  We can stretch the $M_{0}$
-indiscernible sequence $(g_{i}:i<\omega)$  to $(g_{i}:i< {\bar\kappa})$. So $\{(g_{i}\cdot X):i< {\bar\kappa}\}$ is 
also $k$-inconsistent. It follows easily that among the set $\{g_{i}p:i<{\bar\kappa}\}$ of
of complete global types there are $\bar\kappa$ many distinct types. So $p$ does not have bounded orbit. 
\end{proof}

\vspace{2mm}
\noindent
Let us note that various ingredients of the proof of (i) implies (ii) above also appear in earlier papers such as \cite{NIPII}. In fact there {\em is} a proof of (i) implies (ii) (so of the whole lemma) in the more general context of $fsg$ groups in $NIP$ theories, but depending on some additional machinery. It will appear in a subsequent paper.

\begin{Lemma} Suppose $G$ is $1$-dimensional and torsion-free (divisible), and $p\in S_{G}({\bar M})$. Then the following are equivalent:
\newline
(i) $p$ has bounded $G$-orbit,
\newline
(ii) $p$ is $G$-invariant,
\newline
(iii) $p$ is the type at $+\infty$ or the type at $-\infty$ (so definable and $G$-invariant, hence $f$-generic).
\end{Lemma}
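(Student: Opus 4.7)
The strategy is to reduce to the case where $G$ is an ordered divisible abelian group and then analyze Dedekind cuts. By Razenj's theorem (as invoked in the proof of Proposition 4.7), a definably connected, $1$-dimensional, torsion-free group in an $o$-minimal expansion of $RCF$ is definably isomorphic to an open interval of $\bar M$ with a continuous group operation; $G$ inherits a compatible linear order (which one may arrange to be preserved by translation), and for the analysis below it costs nothing to treat $G = (\bar M,+)$ with its usual order. By $o$-minimality a complete type $p \in S_G(\bar M)$ is either realized in $\bar M$ or determined by a proper Dedekind cut $(L_p,R_p)$, and the types at $+\infty$ and $-\infty$ correspond respectively to $L_p = \bar M$ and $L_p = \emptyset$.

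The implications (iii) $\Rightarrow$ (ii) and (ii) $\Rightarrow$ (i) are immediate: translation fixes the extreme cuts, and an invariant type has a singleton orbit. The parenthetical assertion in (iii) --- that the $\pm\infty$ types are definable and $G$-invariant, hence $f$-generic --- is essentially Proposition 4.7 (together with the obvious symmetric argument for $-\infty$), and any $G$-invariant global type $p$ definable over a small model $M_0$ is automatically $f$-generic, since every translate equals $p$ and hence does not fork over $M_0$.

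For (i) $\Rightarrow$ (iii) I argue contrapositively. If $p$ is realized by some $a \in \bar M$, then $Stab(p) = \{0\}$ has index $\bar\kappa$, and we are done. Otherwise $p$ is non-realized with both $L_p$ and $R_p$ nonempty, and a direct check shows the cut of $gp$ equals $L_p + g$, so $S := Stab(p)$ satisfies $L_p + S = L_p$ and $R_p + S = R_p$. The crucial step is to show that $S$ must be unbounded in $(\bar M,<)$: suppose instead $S \subseteq (-M,M)$ for some positive $M \in \bar M$. Using $\bar\kappa$-saturation, inductively construct $c_\alpha \in \bar M$ for $\alpha < \bar\kappa$ satisfying $c_\alpha > c_\beta + 2M$ for all $\beta < \alpha$ (at each stage the relevant partial type over the previously chosen parameters has size $< \bar\kappa$ and is obviously consistent). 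Then $c_\alpha - c_\beta \notin S$ whenever $\alpha \neq \beta$, yielding $|\bar M/S| \geq \bar\kappa$ and contradicting the bounded-orbit hypothesis. Hence $S$ is unbounded, say above. Picking any $c \in L_p$ and translating by arbitrarily large positive $g \in S$ puts arbitrarily large elements into $L_p$, and downward closedness of $L_p$ then forces $L_p = \bar M$, so $p$ is the type at $+\infty$; symmetric argument when $S$ is unbounded below yields $-\infty$. The only nontrivial ingredient is the saturation construction ruling out bounded-yet-unbounded-index stabilizers.
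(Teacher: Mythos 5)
Your proof is correct and takes essentially the same route as the paper's: both reduce to an ordered divisible abelian group and use $\bar\kappa$-saturation to build an increasing sequence with fixed positive gaps witnessing $\bar\kappa$ many distinct translates (you phrase this as $\bar\kappa$ many cosets of an order-bounded $Stab(p)$, while the paper translates the type directly using the gap $b-a$ for $a\in L_p$, $b\in R_p$, which makes the bounded/unbounded stabilizer dichotomy unnecessary). The only substantive difference is that you treat realized types explicitly, which the paper leaves implicit.
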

\begin{proof}  As remarked earlier we can and will identify $G$ with an open interval on which the group operation is 
continuous, and write $G$ additively (it is commutative). We know (or it is clear) that the types at $+\infty$ and 
$-\infty$ are $G$ invariant hence have bounded orbit. So it suffices to prove that any other type 
$q(x)\in S_{G}({\bar M})$ has unbounded $G$-orbit. This is really obvious but we go through details. So $q$ defines a cut in $G$ with nonempty left hand side $L$ and right hand side $R$. Let $a\in L$, $b\in R$ and $c = b-a > 0$. By compactness and saturation we can clearly find an increasing sequence $(d_{i}:i<\bar\kappa)$ in  $G$, such that $i < j$ implies $(d_{j}-d_{i}) \geq c$. Hence $\{d_{i}+ q: i< \bar\kappa\}$ witnesses that $q$ has unbounded orbit.

\end{proof}



\vspace{2mm}
\noindent






\end{document}